\newtheorem{theorem}{Theorem}[section]
\newtheorem{lemma}[theorem]{Lemma}
\newtheorem{proposition}[theorem]{Proposition}
\newtheorem{corollary}[theorem]{Corollary}
\newtheorem{_definition}[theorem]{Definition}
\newenvironment{definition}{\begin{_definition}\rm}{\end{_definition}}
\newtheorem{_remark}[theorem]{\it Remark}
\newtheorem{_claim}[theorem]{Claim}
\numberwithin{equation}{section}
\numberwithin{table}{section}
\numberwithin{figure}{section}
\renewcommand{\P}{\mathord{\mathbb  P}}
\newcommand{\Z}{\mathord{\mathbb Z}}
\newcommand{\Q}{\mathord{\mathbb Q}}
\newcommand{\TTT}{\mathord{\mathcal T}}
\newcommand{\set}[2]{\{\; {#1} \; \mid \; {#2} \;  \}}
\newcommand{\wt}{\widetilde}
\newcommand{\sprime}{\sp\prime}
\newcommand{\sptimes}{\sp\times}
\newcommand{\dual}{\sp{\vee}}
\newcommand{\sperp}{\sp\perp}
\newcommand{\OL}{\mathord{\mathcal{L}}}
\newcommand{\OLL}{\mathord{\tilde{\mathcal{L}}}}
\newcommand{\rmand}{\textrm{and}}
\newcommand{\quand}{\quad\rmand\quad}
\newcommand{\NS}{\mathord{\rm{NS}}}
\newcommand{\ch}{\operatorname{\mathord{\rm{char}}}\nolimits}
\newcommand{\disc}{\operatorname{\mathord{\rm{disc}}}\nolimits}
\newcommand{\Roots}{\mathord{\rm{roots}}}
\newcommand{\Hom}{\mathord{\rm{Hom}}}
\newcommand{\Aut}{\mathord{\rm{Aut}}}
\newcommand{\Sing}{\mathord{\rm{Sing}}}
\newcommand{\rank}{\operatorname{\mathord{\rm{rank}}}\nolimits}
\newcommand{\lat}{\Lambda}
\newcommand{\roots}{\Roots}
\newcommand{\Ex}{{\text{\rm Ex}}}
\begin{document}
\title[Supersingular $K3$ surfaces]{Dynkin diagrams of rank $20$ on supersingular $K3$ surfaces}

\author{Ichiro Shimada}
\address{
Department of Mathematics,
Graduate School of Science,
Hiroshima University,
1-3-1 Kagamiyama,
Higashi-Hiroshima,
739-8526 JAPAN
}
\email{shimada@math.sci.hiroshima-u.ac.jp
}

\author{De-Qi Zhang}
\address{
Department of Mathematics,
National University of Singapore,
10 Lower Kent Ridge Road,
Singapore 119076, SINGAPORE
}
\email{matzdq@nus.edu.sg
}
\begin{abstract}
We classify normal supersingular $K3$ surfaces $Y$ with total Milnor number $20$
in characteristic $p$,
where $p$ is an odd prime that does not divide the discriminant of  the
Dynkin type of the rational double points on $Y$.
This paper appeared in preprint form in the home page of the first author in the year 2005.
\end{abstract}

\subjclass{14J28}

\maketitle

\section{Introduction}
A \emph{Dynkin type} is, by definition,
a finite formal sum of the symbols $A_l (l\ge 1)$, $D_m (m\ge 4)$ and
$E_n (n=6,7,8)$ with non-negative integer coefficients.
For a Dynkin type $R$,
we denote by $L(R)$ the \emph{negative-definite} lattice
whose intersection matrix is $(-1)$ times the Cartan matrix of type $R$.
We denote by $\rank (R)$ the rank of $L(R)$, and by $\disc (R)$ the discriminant of $L(R)$.
\par
A \emph{normal $K3$ surface} is a normal surface whose minimal resolution is a $K3$ surface.
It is well-known that
a normal  $K3$ surface has only rational double points as its singularities
(\cite{Artin1962, Artin1966}).
Hence we can associate  a Dynkin type to the singular locus $\Sing (Y)$ of a normal $K3$ surface $Y$.
Recall that
the Milnor number of a rational double point of type $A_n$ (resp.~$D_n$,~resp.~$E_n$)
is $n$.
Hence
the rank of the Dynkin type of $\Sing (Y)$ is equal to
the sum of Milnor numbers of singular points on $Y$, that is, the \emph{total Milnor number of $Y$}.
In particular,
it is at most $21$.
\par
%
%
If the total Milnor number of a normal $K3$ surface $Y$ is $\ge 20$,
then the minimal resolution $X$ of $Y$ has Picard number $\ge 21$, and hence is a supersingular $K3$ surface
(in the sense of Shioda~\cite{Shioda}).
In~\cite[Theorem~3.7]{Goto}, Goto proved that a normal $K3$ surface $Y$ with total Milnor number
$21$ exists only when  the characteristic of the base field divides the discriminant of the Dynkin type of $\Sing (Y)$.
In~\cite{Shimada},
the first author made the complete list of the pairs $(R, p)$ of a Dynkin type $R$  of rank $21$ and a prime integer  $p$
such that
$R$ is  the  Dynkin type of the singular locus  of a normal $K3$ surface
in characteristic $p$.
\par
%
%
In this paper,
we investigate normal $K3$ surfaces with total Milnor number $20$.
\begin{definition}
Let $R$ be a Dynkin type of rank $20$.
A prime integer $p$ is called
an \emph{$R$-supersingular $K3$ prime}
if it satisfies the following:
\begin{itemize}
\item[(i)] $p$ is odd and does not divide $\disc (R)$, and
\item[(ii)] there exists a normal $K3$ surface $Y$ defined over an algebraically closed field
of characteristic $p$ such that $\Sing (Y)$ is of type $R$.
\end{itemize}
\end{definition}
The \emph{Artin invariant} of a supersingular $K3$ surface $X$ in characteristic $p$ is
the positive integer $\sigma$ such that the discriminant of the N\'eron-Severi lattice $\NS(X)$
of $X$ is equal to
$-p^{2\sigma}$. (See~\cite{Ar}).
We will prove that,
if $p$ is an $R$-supersingular $K3$ prime
for a Dynkin type $R$ with $\rank (R)=20$,
and if $Y$ is a normal $K3$ surface in the condition (ii) above,
then the Artin invariant of the minimal resolution of $Y$ is $1$.
It is known that,
for each $p$, the supersingular $K3$ surface with Artin invariant $1$ is unique up to isomorphisms
(\cite{Ogus, DKondo}).
Therefore the condition (i) and (ii) above is equivalent to (i) and the following:
\begin{itemize}
\item[(ii)${}\sprime$]
the supersingular $K3$ surface $X_p$ in characteristic $p$ with Artin invariant $1$
is birational to a normal $K3$ surface $Y$ such that  $\Sing (Y)$ is of type $R$.
\end{itemize}
In this paper, we present an algorithm to determine
the set of $R$-supersingular $K3$ primes for a given Dynkin type $R$ of rank $20$.
As a corollary, we prove the following.
\begin{theorem}\label{thm:Sigma1}
Let $R$ be a Dynkin type of rank $20$,
and let $a_R$ be the product of the odd prime divisors of $\disc (R)$.
We put $b_R:=8 a_R$ if $\disc (R)$ is even,
while $b_R:=a_R$ if $\disc (R)$ is odd.
Then there exists a subset $\Sigma_R$ of $(\Z/b_R\Z)\sptimes$
such that
a  prime integer $p$ is an $R$-supersingular $K3$ prime
if and only if
$p \bmod b_R\in \Sigma_R$.
\end{theorem}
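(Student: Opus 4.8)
The plan is to translate the defining condition~(ii) into the existence of a certain primitive sublattice of the single lattice $\NS(X_p)$, and then to deduce from Nikulin's theory of discriminant forms together with quadratic reciprocity that this existence depends only on $p\bmod b_R$.

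\emph{Geometric reduction.} For $p$ satisfying~(i) we know, as recalled above, that~(ii) is equivalent to $(\mathrm{ii})\sprime$: the supersingular $K3$ surface $X_p$ of Artin invariant $1$ admits a birational morphism onto a normal $K3$ surface $Y$ with $\Sing(Y)$ of type $R$. By the Rudakov--Shafarevich description of the chambers into which the hyperplanes orthogonal to the classes of square $-2$ divide the positive cone of $\NS(X_p)$, such a $Y$ exists if and only if there is a class $h\in\NS(X_p)$ with $h^2>0$ for which $\set{v\in\NS(X_p)}{v^2=-2,\ v\cdot h=0}$ generates a lattice isometric to $L(R)$: after applying an element of the Weyl group one may take $h$ nef, and then the contraction defined by $h$ collapses exactly the $(-2)$-curves orthogonal to $h$, which form a Dynkin diagram of type $R$. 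Passing to the saturation $\bar M$ of that lattice, its vectors of square $-2$ are precisely $\roots(L(R))$; so such an $h$ exists if and only if $\NS(X_p)$ contains a primitive negative-definite sublattice $\bar M$ of rank $20$ with $L(R)\st\bar M$ of finite index and $\roots(\bar M)=\roots(L(R))$ --- the orthogonal complement of such a $\bar M$ has signature $(1,1)$, so it contains classes of positive square, and a generic one of them is orthogonal to no root of $\NS(X_p)$ outside $\bar M$. The set $\mathcal M_R$ of overlattices $\bar M$ of $L(R)$ with $\roots(\bar M)=\roots(L(R))$ is finite and depends only on $R$. Hence $p$ is an $R$-supersingular $K3$ prime if and only if some $\bar M\in\mathcal M_R$ embeds primitively into $\NS(X_p)$.

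\emph{Lattice arithmetic.} Now fix $\bar M\in\mathcal M_R$. The lattice $\NS(X_p)$ is even of signature $(1,21)$, with discriminant group $(\Z/p\Z)^2$ and a discriminant form $q_p$ determined by the signature via Milgram's formula. Since $p\nmid\disc(R)$, the lattice $\bar M$ is $p$-adically unimodular, so $\disc(\bar M)$ is odd and prime to $p$; because the $p$-part $(\Z/p\Z)^2$ of the discriminant group of the rank-$2$ orthogonal complement already uses up both of its generators, Nikulin's criterion for a primitive embedding $\bar M\inj\NS(X_p)$ forces the gluing to involve the entire discriminant group of $\bar M$ and reduces the question to the existence of an even lattice of signature $(1,1)$ carrying a prescribed discriminant form. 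That existence is governed by finitely many local conditions, one at each prime dividing $2\,\disc(\bar M)$: the condition at an odd prime $q\mid\disc(\bar M)$ is a quadratic-residue symbol in $p$ which, after reciprocity, is determined by $p\bmod q$ (and, when the even part of the gluing intervenes, also by $p\bmod 4$), while the condition at $2$ --- present exactly when $\disc(\bar M)$, hence $\disc(R)$, is even --- is determined by $p\bmod 8$. As every odd prime dividing $\disc(\bar M)$ divides $a_R$, the whole collection of conditions depends only on $p\bmod b_R$, so the set of primes $p$ for which this $\bar M$ embeds primitively into $\NS(X_p)$ is a union of residue classes in $(\Z/b_R\Z)\sptimes$. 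Taking the union over the finitely many $\bar M\in\mathcal M_R$ yields the set $\Sigma_R$.

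\emph{Main obstacle.} The routine parts are the chamber criterion for contractions, the finiteness of $\mathcal M_R$, and the reduction to Artin invariant $1$. The real work lies in the lattice arithmetic: carrying out Nikulin's embedding criterion in the boundary case $\rank(L(R))+2=\rank(\NS(X_p))$, identifying exactly which Legendre and Jacobi symbols in $p$ appear among the local conditions, and verifying that the signs coming from Milgram's formula and from quadratic reciprocity combine so that no modulus beyond $a_R$ is needed when $\disc(R)$ is odd, whereas exactly the extra factor $8$ (contributed by the prime $2$) is forced when $\disc(R)$ is even.
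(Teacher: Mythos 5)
Your geometric reduction runs parallel to the paper's Theorem~\ref{Th2.1}: it is acceptable in outline (the ``generic $h$'' step needs the paper's actual trick --- take $v\in T\dual$ primitive with $v^2<-2$ and let $h$ generate $v\sperp\cap T\dual$, so that a root orthogonal to $h$ has $T$-component $mv$ and hence $m=0$ --- rather than a genericity claim, since the roots outside $\bar M$ are in general infinite in number, but this is fixable). The genuine gap is in the arithmetic half, which is the actual content of Theorem~\ref{thm:Sigma1}. First, your description of where $p$ enters the local conditions is misplaced: in Nikulin's existence criterion for the rank-$2$ complement $T$, the boundary conditions at the odd primes $q\mid\disc(\bar M)$ do \emph{not} involve $p$ at all, because $|D_T|=p^2|\disc(\bar M)|$ and $p^2$ is a square unit at $q$; the $p$-dependence sits instead in the boundary condition at the prime $p$ itself (where $\ell((D_T)_p)=2=\rank T$, a condition you omit from your list ``one at each prime dividing $2\disc(\bar M)$'') and in Milgram's signature condition linking the chosen $p$-elementary form on $(\Z/p\Z)^2$ with $-q_{\bar M}$. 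Second, and decisively, the verification that after quadratic reciprocity these conditions depend only on $p\bmod b_R$ --- in particular that no modulus beyond $a_R$ survives when $\disc(R)$ is odd, which is exactly the assertion $b_R=a_R$ of the theorem --- is not carried out; you yourself label it ``the real work'' and stop there. A sketch that establishes only dependence on $p\bmod 8a_R$ proves a strictly weaker statement than Theorem~\ref{thm:Sigma1}.

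For comparison, the paper never invokes Nikulin's existence theorem, Milgram's formula, or reciprocity. Its key observation is that the intersection form on $T_Y$ is divisible by $p$, so the complement is $pT\sprime$ for an even binary lattice $T\sprime$ with $\disc(T\sprime)\mid\disc(R)$; the finitely many such $T\sprime$ are listed once and for all by Gauss reduction, independently of $p$, and the criterion (Theorem~\ref{Th2.1}(3)) becomes $(D_{T\sprime},pq_{T\sprime})\cong(D_{\wt L},-q_{\wt L})$. Proposition~\ref{prop:onlyfinite} then shows by an elementary Chinese-Remainder argument ($p_1\equiv a^2p_2\bmod 2d$ and multiplication by $a$) that the isomorphism class of $(D,pq)$ depends only on the symbols $\left(\frac{p}{l}\right)$ for odd $l\mid d$ and on $p\bmod 8$ when $d$ is even --- and on nothing mod $8$ (or $4$) when $d$ is odd, since all quantities are odd. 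This yields the sharp modulus $b_R$ directly, which is precisely the point your proposal defers. To complete your route you would have to carry out the Nikulin--Milgram--reciprocity bookkeeping in the boundary case and prove the cancellation of the $2$-adic contribution for odd $\disc(R)$; as written, the theorem's main claim is assumed rather than proved.
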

In fact,
we have a result finer than above.
Let $Y$ be a normal supersingular $K3$ surface in characteristic $p\ne 2$
such that $\Sing (Y)$ is of Dynkin type $R$ with $\rank (R)=20$,
and let $X\to Y$ be the minimal resolution of $Y$.
We denote by $L_Y$ the sublattice of the N\'eron-Severi lattice $\NS (X)$ of $X$
generated by the classes of the exceptional curves of $X\to Y$.
Then $L_Y$ is isomorphic to $L(R)$.
Let $T_Y$ denote the orthogonal complement of $L_Y$ in $\NS (X)$.
Then $T_Y$ is an even indefinite lattice of rank $2$.
Our key observation is the following:
\begin{equation}\label{eq:observation}
tt\sprime\in p\Z\quad\textrm {for all}\quad t, t\sprime\in T_Y,
\end{equation}
where $tt\sprime\in \Z$ is the intersection number of the classes $t$ and $t\sprime$ in $\NS (X)$.
Thus we can define an indefinite lattice $T\sprime_Y$ of rank $2$ by
introducing a new bilinear form
$$
(t, t\sprime)_{T\sprime_Y}:=\frac{1}{p}(t t\sprime).
$$
on the $\Z$-module underlying $T_Y$.
It turns out that $\disc (T\sprime_Y)$ divides $\disc (R)$.
Note that, since $p$ is odd, $T\sprime_Y$ is an even lattice.
Let $\wt{L}_Y$ be the orthogonal complement of $T_Y$ in $\NS (X)$.
Then $\wt{L}_Y$ is an even overlattice of $L_Y$
such that the set $\Roots (\wt{L}_Y)$ of roots in $\wt{L}_Y$
coincides with the set $\Roots (L_Y)$ of roots in $L_Y$.
The following is a refinement of Theorem~\ref{thm:Sigma1}.
\begin{theorem}\label{thm:Sigma2}
Let $R$ be a Dynkin type of rank $20$,
let $T\sprime$ be an even indefinite lattice of rank $2$
such that $\disc (T\sprime)$ divides $\disc (R)$,
and let $\wt{L}$ be an even overlattice of $L(R)$ such that $\Roots (\wt{L})=\Roots (L(R))$.
Then there exist a subset $S_l$ of $\{1, -1\}$ for each odd prime divisor $l$ of $\disc (R)$,
and a subset $S_2$ of $\{1,3,5,7\}$,
such that  the following holds.
Let $p$ be an odd prime that does not divide $\disc(R)$.
Then there exists a normal $K3$ surface $Y$ in characteristic $p$
with $\Sing (Y)$ being of type $R$
such that $T\sprime_Y\cong T\sprime$ and $\wt{L}_Y\cong \wt{L}$
if and only if
\begin{equation}\label{eq:pcond}
\vtop{
\hbox{$\left(\frac{p}{l}\right)\in S_l$ for each odd prime divisor $l$ of $\disc(R)$, and}
\hbox{$p \bmod 8 \in S_2$.}
}
\end{equation}
If $\disc (R)$ is odd, then we have $S_2=\{1,3,5,7\}$.
\end{theorem}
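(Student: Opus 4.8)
The plan is to translate the existence of $Y$ into the existence of a suitable even overlattice of $\wt L\oplus T'(p)$ isomorphic to the N\'eron--Severi lattice of the Artin-invariant-$1$ supersingular $K3$ surface $X_p$, and then to analyze that lattice condition one prime at a time via discriminant forms. For the reduction: by the result on the Artin invariant recalled above, the minimal resolution of any $Y$ as in the theorem is $X_p$, which is unique, so its N\'eron--Severi group is a fixed lattice $\Lambda_{p,1}$ with $\disc(\Lambda_{p,1})=-p^2$. If such a $Y$ exists, the classes of its exceptional curves span $L_Y\cong L(R)$ with saturation $\wt L_Y$, and $T_Y:=L_Y^\perp=\wt L_Y^\perp$; by the key observation \eqref{eq:observation}, $T_Y=T'_Y(p)$ with $T'_Y$ even of rank $2$ and $\disc(T'_Y)\mid\disc(R)$, and $\Lambda_{p,1}$ is an even overlattice of $\wt L_Y\oplus T'_Y(p)$ in which $\wt L_Y$ is primitive with orthogonal complement $T'_Y(p)$. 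Conversely, given an even overlattice $N\cong\Lambda_{p,1}$ of $\wt L\oplus T'(p)$ with $\wt L$ primitive and $\wt L^\perp=T'(p)$, I would identify $\NS(X_p)$ with $N$, pick a sufficiently general $h\in T'(p)$ with $h^2>0$ --- since $T'(p)$ contains no roots ($p$ being odd) and only finitely many roots of $N$ have negative-square component in $T'(p)\otimes\bQ$, such an $h$ satisfies $h^\perp\cap\Roots(N)=\Roots(\wt L)=\Roots(L(R))$ --- and move $h$ into the nef cone of $X_p$ by an element of the Weyl group of $\Roots(N)$. Contracting the $(-2)$-curves orthogonal to the resulting nef and big class yields a normal $K3$ surface $Y$ with $\Sing(Y)$ of type $R$, $\wt L_Y\cong\wt L$, $T'_Y\cong T'$. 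Thus the theorem is equivalent to asking: for which $p$ does such an overlattice $N$ exist?

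By Nikulin's theory, even overlattices of $\wt L\oplus T'(p)$ in which $\wt L$ stays primitive correspond to isotropic subgroups $H$ of $\disc(\wt L)\oplus\disc(T'(p))$ with $H\cap\disc(\wt L)=0$, and since the signature $(1,21)$ and evenness are fixed and the supersingular $K3$ lattice is determined by its discriminant form, $N\cong\Lambda_{p,1}$ amounts to $H^\perp/H\cong\disc(\Lambda_{p,1})$, a nondegenerate form on $(\cyc{p})^2$. As $p\nmid\disc(R)$, the group $\disc(\wt L)$ is prime to $p$ and $\disc(T'(p))\cong D\oplus(\cyc{p})^2$, where $D$ is the prime-to-$p$ part (carrying the discriminant form of $T'$ rescaled by $p$) and $(\cyc{p})^2$ carries $\tfrac1p$ times the reduction mod $p$ of a Gram matrix $G_{T'}$ of $T'$. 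Since $H^\perp/H$ must be a $p$-group of order $p^2$, the prime-to-$p$ part of $H$ has to be a Lagrangian subgroup of $\disc(\wt L)\oplus D$ transversal to $\disc(\wt L)$, and the $p$-part of $H$ must vanish. Therefore the existence of $N$ is equivalent to the conjunction of \textbf{(1)} an anti-isometry $\disc(\wt L)\cong\disc(T')(-p)$ exists, and \textbf{(2)} $\bigl((\cyc{p})^2,\tfrac1p G_{T'}\bmod p\bigr)\cong\disc(\Lambda_{p,1})$.

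Finally I would put (1) and (2) into congruence form. From the Rudakov--Shafarevich description of $\Lambda_{p,1}$ together with Milgram's formula (the signature of $\disc(\Lambda_{p,1})$ is $\equiv\mathrm{sign}(\Lambda_{p,1})=-20\pmod 8$) and Gauss sums, $\disc(\Lambda_{p,1})$ is the anisotropic rank-$2$ form over $\F_p$, so (2) becomes a single Legendre condition $\bigl(\frac{-\disc(T')}{p}\bigr)=\varepsilon_0$ for a fixed sign $\varepsilon_0$. For (1): over $\Z_l$ with $l$ odd, rescaling a finite quadratic form by a unit changes each homogeneous block's determinant by a square class, which is detected modulo $l$; hence (1) at $l$ is a condition on $\bigl(\frac pl\bigr)$ alone, forcing it to a single value precisely when some homogeneous component of $\disc(T')_l$ occurs with odd multiplicity (and being unsatisfiable, or imposing no constraint, otherwise), while at $l=2$ it is a condition on $p\bmod 8$. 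Expanding $\bigl(\frac{-\disc(T')}{p}\bigr)$ by quadratic reciprocity, the factors $\bigl(\frac pl\bigr)$ that appear are exactly those at primes $l$ with $v_l(\disc(T'))$ odd; and at such $l$ the elementary divisors of $\disc(T')_l$ are necessarily unequal, so (1) has already pinned $\bigl(\frac pl\bigr)$ down there. Consequently, once (1) is imposed, (2) reduces to a condition purely on $p\bmod 8$. I then let $S_l$ be the set of values of $\bigl(\frac pl\bigr)$ allowed by (1), and $S_2$ the set of residues modulo $8$ allowed by the $2$-adic part of (1) and the residual part of (2); this yields the asserted equivalence. When $\disc(R)$ is odd, writing an even indefinite rank-$2$ Gram matrix as $\bigl(\begin{smallmatrix}2a&b\\ b&2c\end{smallmatrix}\bigr)$ gives $-\disc(T')=b^2-4ac\equiv1\pmod 4$, so the reciprocity expansion of $\bigl(\frac{-\disc(T')}{p}\bigr)$ carries neither a $\bigl(\frac{-1}{p}\bigr)$ nor a $\bigl(\frac2p\bigr)$ factor; hence (2) contributes nothing modulo $8$ and $S_2=\{1,3,5,7\}$.

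The main obstacle is this last step, and in particular the decoupling: a priori (2) is a single relation entangling the various $\bigl(\frac pl\bigr)$ with $p\bmod 8$, and one must exploit the precise elementary-divisor structure of $\disc(T')$ forced by (1) to see that it imposes nothing new on the $\bigl(\frac pl\bigr)$ and collapses to an independent congruence modulo $8$. Identifying $\disc(\Lambda_{p,1})$ exactly and the $2$-adic bookkeeping behind $S_2$ are the other places demanding care; the geometric reduction in the first step is comparatively routine, resting on the uniqueness of $X_p$ and the standard theory of $(-2)$-curves on $K3$ surfaces.
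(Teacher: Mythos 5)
Your overall strategy is sound and, in its first half, is essentially the paper's: reduce via the uniqueness of $X_p$ and Nikulin gluing to a discriminant-form condition (this is Theorem~\ref{Th2.1} and Proposition~\ref{Prop2.2}), then turn that condition into congruences on $p$. Two concrete points, however, need repair. First, your justification for choosing $h$ is wrong as stated: it is \emph{not} true in general that only finitely many roots of $N$ have a $T'(p)\otimes\Q$-component of negative square. Since $T=T'(p)$ is an indefinite binary lattice whose orthogonal group is typically infinite, a fixed negative square value in $(-2,0)$ can be represented by infinitely many vectors of $T\dual$, and correspondingly $N$ can contain infinitely many roots outside $\wt{L}$. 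The conclusion (existence of a suitable $h$) is still correct, but it needs a different argument; the paper's is to take a primitive $v\in T\dual$ with $v^2<-2$ and let $h$ generate $v\sperp\cap T\dual$, so that any root orthogonal to $h$ has $T$-component $mv$, forcing $m=0$. Alternatively one can argue that the bad directions accumulate only at the two isotropic rays of $T\otimes\R$, but ``only finitely many bad roots'' is not a valid reason.

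Second, your condition \textbf{(2)} on the $p$-part is in fact redundant, and not noticing this leaves a gap in your last claim. Given \textbf{(1)}, the glued lattice is even, $p$-elementary, of signature $(1,21)$ and discriminant $-p^2$, and every such lattice is isomorphic to $\NS(X_p)$ (Corollary~\ref{cor:RS2} together with the uniqueness of the N\'eron--Severi lattice; equivalently, Milgram's formula forces the $p$-part of its discriminant form to be the anisotropic one). Hence the entanglement you worry about never arises. As written, your prescription has a hole precisely where you assert $S_2=\{1,3,5,7\}$ for $\disc(R)$ odd: your reciprocity computation only shows that, given \textbf{(1)}, the residual condition \textbf{(2)} is independent of $p\bmod 8$; if it were identically false your recipe would output $S_2=\emptyset$, and you never rule this out. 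Either prove that \textbf{(2)} follows from \textbf{(1)} (the missing point), or patch the construction by emptying one of the $S_l$ in that case. Note also that the paper obtains the product structure much more directly, with no quadratic reciprocity and no identification of $q_{\NS(X_p)}$: by Proposition~\ref{prop:onlyfinite}, if $\tau_d(p_1)=\tau_d(p_2)$ then $p_1\equiv a^2p_2 \bmod 2d$ for some $a$ prime to $d$, so $(D,p_1q)\cong(D,p_2q)$; combined with Theorem~\ref{Th2.1}(3) this immediately yields the sets $S_l$, $S_2$ and the statement about $S_2$ when $\disc(R)$ is odd. Your Jordan-block/reciprocity analysis can be made to work, but it is doing extra labor and is currently where the argument is least watertight.
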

Using computational algebra system {\tt Maple}, we have made
the complete list of $R$-supersingular $K3$ primes,
which  is
too large to be included in this paper.
It is available from the first author's home page
\begin{center}
\verb+http://www.math.sci.hiroshima-u.ac.jp/~shimada/K3.html+
\end{center}
in the plain text format.
From this list, we derive  the following fact:
\begin{theorem}\label{thm:half}
For each Dynkin type $R$ with $\rank (R)=20$,
the set of $R$-supersingular $K3$ primes is either empty or
has  a natural density $1/2$.
\end{theorem}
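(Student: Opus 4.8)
The plan is to isolate a single quadratic character of $p$ that controls membership and use it to pin the set of $R$-supersingular $K3$ primes into one index-$2$ coset of $(\Z/b_R\Z)\sptimes$. First I would reduce, via Theorem~\ref{thm:Sigma1}, to showing that the set $\Sigma_R\st(\Z/b_R\Z)\sptimes$ is either empty or of cardinality $\phi(b_R)/2$; the density assertion then follows from Dirichlet's theorem on primes in arithmetic progressions. By Theorem~\ref{thm:Sigma2} and the finiteness of the relevant data --- finitely many even overlattices $\wt L$ of $L(R)$ with $\Roots(\wt L)=\Roots(L(R))$, and finitely many even indefinite rank-$2$ lattices $T\sprime$ with $\disc(T\sprime)\mid\disc(R)$ --- the set $\Sigma_R$ is a finite union of sets defined by condition~\eqref{eq:pcond}.

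The heart of the matter is a ``universal'' necessary condition. Fix a normal $K3$ surface $Y$ realizing $R$ in characteristic $p$, and let $X=X_p$ be its minimal resolution. Then $\wt L_Y\oplus T_Y$ is a finite-index sublattice of $\NS(X)$, of index $m$ say, and $T_Y$ is obtained from $T\sprime_Y$ by multiplying the bilinear form by $p$. Comparing $p$-adic valuations of discriminants shows $p\nmid m$, and comparing the discriminants themselves gives $\disc(\wt L_Y)\cdot\disc(T\sprime_Y)=-m^2$. Hence the $p$-primary part of the discriminant form $q_{\NS(X)}$ is isometric to the $p$-primary part of the discriminant form of $T_Y$, which is a nondegenerate quadratic form on $(\Z/p)^2$ of determinant $\left(\frac{\disc(T\sprime_Y)}{p}\right)$ in $\F_p\sptimes/(\F_p\sptimes)^2$. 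On the other hand $\NS(X)$ has signature $(1,21)$, hence signature $\equiv 4\pmod 8$; Milgram's formula (a Gauss-sum computation) then determines the determinant class of that $p$-primary part and yields $\left(\frac{-\disc(T\sprime_Y)}{p}\right)=-1$. Substituting $\disc(\wt L_Y)=\disc(R)/k^2$ and $-\disc(T\sprime_Y)=m^2/\disc(\wt L_Y)$, with $k$ and $m$ prime to $p$, this simplifies to $\left(\frac{\disc(R)}{p}\right)=-1$, a condition on $p$ independent of all the choices. Thus if $\disc(R)$ is a perfect square then $\Sigma_R=\emptyset$, and in general $\Sigma_R$ is contained in the non-trivial coset $C:=\{\bar p\in(\Z/b_R\Z)\sptimes : (\tfrac{\disc(R)}{p})=-1\}$, whose cardinality is $0$ or $\phi(b_R)/2$.

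Finally I would establish the reverse inclusion in the nonempty case: pick a pair $(T\sprime,\wt L)$ for which \eqref{eq:pcond} is satisfied by some prime, and argue that then \eqref{eq:pcond} for this pair describes exactly $C$, so that $\Sigma_R=C$. Using Nikulin's theory of discriminant forms, the realizability of $R$ in characteristic $p$ with $(T\sprime_Y,\wt L_Y)\cong(T\sprime,\wt L)$ is equivalent to a gluing problem matching the discriminant forms of $\wt L$ and of $T\sprime$ scaled by $p$ against that of $\NS(X_p)$, subject to the root condition $\Roots(\NS(X_p))\cap(\wt L\otimes\Q)=\Roots(\wt L)$. The prime-to-$p$ part of the gluing (existence of an isotropic ``anti-diagonal'' in the discriminant forms of $\wt L$ and $T\sprime$) and, for all but finitely many $p$, the root condition depend only on $(T\sprime,\wt L)$; the sole genuinely $p$-dependent constraint is the matching of the $p$-primary parts, which by the computation above is precisely $\left(\frac{\disc(R)}{p}\right)=-1$, using that for a realizable pair $-\disc(T\sprime)\disc(R)$ is a perfect square so that $-\disc(T\sprime)$ and $\disc(R)$ have the same squarefree part. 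The hard part is exactly this separation: proving that the criterion of Theorem~\ref{thm:Sigma2} really splits into a $p$-free part and a single Legendre condition, and in particular that the root condition contributes no further dependence on $p$; this is where the bulk of the lattice-theoretic work lies. (Alternatively, once $\Sigma_R\st C$ is known, the equality $\Sigma_R=C$ whenever $\Sigma_R\neq\emptyset$ can be read off directly from the computed tables of $\Sigma_R$.)
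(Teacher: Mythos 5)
Your first step (the containment) is fine in spirit: the discriminant comparison $-\disc(T\sprime_Y)\disc(R)=(\text{square})$ together with the known discriminant form of $\NS(X_p)$ does force a single Legendre-type condition such as $\left(\frac{\disc(R)}{p}\right)=-1$, and this is consistent with Lemma~\ref{sqfree} and bounds $\Sigma_R$ inside a set of density at most $1/2$. The genuine gap is in the reverse inclusion. You claim that for a \emph{fixed} realizable pair $(T\sprime,\wt{L})$ the condition~\eqref{eq:pcond} reduces to that one Legendre condition, because ``the prime-to-$p$ part of the gluing depends only on $(T\sprime,\wt{L})$.'' That is false: the gluing at the primes $l\mid\disc(R)$ requires an anti-isometry $(D_{T\sprime},p\,q_{T\sprime})\cong(D_{\wt{L}},-q_{\wt{L}})$, and the multiplication by $p$ changes the local form at \emph{every} such $l$, so the condition involves $\left(\frac{p}{l}\right)$ for each odd $l\mid\disc(R)$ and $p\bmod 8$ separately --- this is exactly the content of Theorem~\ref{thm:Sigma2} and Section~\ref{sec:fqf}. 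The paper's own example in Section~\ref{sec:detail} refutes your claim: for $R=D_7+A_{11}+2A_1$ and the single pair $(L(H_1),T_+)$ the realizable primes are those with $p\equiv 3,7 \pmod 8$ \emph{and} $\left(\frac{p}{3}\right)=1$, a set of density $1/4$, not the full coset $C$; only the union over several pairs (here $T_+$ and $T_-$, resp.\ $L(H_1)$ and $L(H_3)$) fills up the density-$1/2$ set. So the hard point --- that the union of these density-$2^{-k}$ pieces over all admissible pairs is always either empty or exactly the full half --- is not established by your argument.

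This is also where your proposal diverges from the paper: the paper has no conceptual proof of Theorem~\ref{thm:half}. It is obtained by running the algorithm of Section~\ref{sec:algorithm} over all $3058$ Dynkin types of rank $20$ and inspecting the resulting complete list of sets $S(R)$; the authors explicitly state that a proof ``by some geometric reasonings'' rather than brute calculation would be desirable. Your parenthetical fallback (``read off from the computed tables'') is in effect the paper's actual proof, but as a blind argument it presupposes those tables, and your proposed conceptual route, as written, does not close the gap.
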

It would be very nice if Theorem~\ref{thm:half} is proved,
not by brute calculations of making the complete list, but by some
geometric reasonings.
\par
As another corollary of the key observation~\eqref{eq:observation},
we obtain the following:
\begin{corollary}\label{cor:degree}
Let $Y\subset \P^N$ be a normal supersingular $K3$ surface of total Milnor number $20$
such that $\Sing (Y)$ is of type $R$.
If the characteristic $p$ of the base field is odd and does not divide $\disc (R)$,
then the degree of $Y$ is divisible by $2p$.
\end{corollary}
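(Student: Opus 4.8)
The plan is to identify the degree of $Y$ with the self-intersection of a class that lies in the rank-$2$ lattice $T_Y$, and then to invoke the key observation~\eqref{eq:observation}.

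First I would set up notation: let $f\colon X\to Y$ be the minimal resolution, and let $H$ be the hyperplane class of $Y\st\P^N$, regarded as a Cartier divisor class, so that $\deg Y=H^2$. Since $H$ is the restriction to $Y$ of $\OOO_{\P^N}(1)$, the pullback $h:=f^*H$ is a genuine element of $\NS(X)$ (not merely of $\NS(X)\otimes\Q$). Because $f$ is a proper birational morphism of surfaces and $Y$ is normal, the projection formula gives $f_*h=H$ and hence $h^2=H^2=\deg Y$; note also $h^2>0$ since $H$ is ample, so the assertion is not vacuous.

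Next I would verify that $h\in T_Y$. Each exceptional curve $E$ of $f$ is contracted to a point, so the intersection number $h\cdot E=H\cdot f_*E=0$; thus $h$ is orthogonal to the sublattice $L_Y\st\NS(X)$ spanned by the classes of the exceptional curves. As $T_Y$ is by definition the orthogonal complement of $L_Y$ in $\NS(X)$, this gives $h\in T_Y$. Now apply~\eqref{eq:observation} with $t=t\sprime=h$: we obtain $h^2\in p\Z$, i.e.\ $p\mid\deg Y$. Finally, $X$ is a $K3$ surface, so $\NS(X)$ is an even lattice and $\deg Y=h^2$ is even; since $p$ is odd we have $\gcd(2,p)=1$, and therefore $2p\mid\deg Y$.

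I do not expect a genuine obstacle here: the corollary is a short formal consequence of~\eqref{eq:observation}. The only points calling for a word of justification are the standard intersection-theoretic facts on the normal surface $Y$ and its resolution—that $h=f^*H$ is an integral class, that $h^2=\deg Y$, and that $h\perp L_Y$. If one prefers to bypass the projection formula, one can instead observe that the pullback $h$ of $\OOO_Y(1)$ is a big and nef class on $X$ with $h\cdot E=0$ for every $f$-exceptional curve $E$ and with $h^2=\deg Y$, which is enough for the same conclusion.
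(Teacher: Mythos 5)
Your proposal is correct and follows the paper's own (very brief) argument: the pull-back $h=f^*H$ of the hyperplane class lies in $T_Y$, so~\eqref{eq:observation} gives $p\mid h^2=\deg Y$, and evenness plus $p$ odd gives $2p\mid \deg Y$. The only cosmetic difference is that the paper could deduce divisibility by $2p$ in one step from the evenness of the rescaled lattice $T\sprime_Y$, whereas you combine $p\mid h^2$ with the evenness of $\NS(X)$; these are interchangeable.
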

Indeed the class of the pull-back of the hyperplane section of $Y$ to $X$ is
contained in $T_Y$.
Note that,
if $R$ is of rank $20$, then every prime divisor of $\disc(R)$ is $\le 19$.
Combining Corollary~\ref{cor:degree}
with~\cite[Theorem~3.7]{Goto},
we obtain the following:
\begin{corollary}\label{cor:mu19}
Let $Y$ be a normal $K3$ surface of degree $d$
in characteristic $p>19$.
If $p$ does not divide $d$,
then
the total Milnor number  of $Y$ is $\le 19$.
\end{corollary}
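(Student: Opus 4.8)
The plan is to argue by contraposition: suppose $Y$ is a normal $K3$ surface of degree $d$ in characteristic $p > 19$ with total Milnor number $\mu$ equal to $20$ or $21$; I will derive that $p \mid d$. First consider the case $\mu = 21$. By \cite[Theorem~3.7]{Goto}, such a surface exists only when $p \mid \disc(R)$, where $R$ is the Dynkin type of $\Sing(Y)$; but $R$ has rank $21$, so every prime divisor of $\disc(R)$ is at most $19$ (the largest possible contribution coming from a single $A_{l}$ or $D_{m}$ or $E_{n}$ summand with index bounded by the rank), forcing $p \le 19$, contrary to hypothesis. Hence $\mu = 21$ cannot occur at all when $p > 19$, and there is nothing to check for $d$.

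It remains to treat $\mu = 20$. Again let $R$ be the Dynkin type of $\Sing(Y)$, so $\rank(R) = 20$. If $p \mid \disc(R)$ then, as in the rank-$21$ case, $p$ would divide an integer all of whose prime factors are $\le 19$, contradicting $p > 19$; so $p \nmid \disc(R)$. Since $p$ is also odd (as $p > 19$), the hypotheses of Corollary~\ref{cor:degree} are satisfied: $Y \subset \P^{N}$ (embedded by the very ample line bundle of degree $d$), $Y$ is a normal supersingular $K3$ surface of total Milnor number $20$ with $\Sing(Y)$ of type $R$, $p$ is odd, and $p \nmid \disc(R)$. Corollary~\ref{cor:degree} then gives $2p \mid d$, in particular $p \mid d$, which is exactly the contrapositive of what we want.

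Putting the two cases together: if $p > 19$ and $p \nmid d$, then $\mu = 21$ is impossible outright and $\mu = 20$ would force $p \mid d$, a contradiction; hence $\mu \le 19$. The only subtlety worth spelling out is the uniform bound ``every prime divisor of $\disc(R)$ is $\le 19$ when $\rank(R) \le 21$'': this follows because $\disc$ is multiplicative over the Dynkin summands, $\disc(A_{l}) = l+1$, $\disc(D_{m}) = 4$, $\disc(E_{6}) = 3$, $\disc(E_{7}) = 2$, $\disc(E_{8}) = 1$, and each summand's index is bounded by the total rank, so no prime exceeding $20$ (hence, being prime, exceeding $19$) can divide any $l+1 \le 22$ arising this way—and the single borderline value $l+1 = 22 = 2 \cdot 11$ contributes only the primes $2$ and $11$. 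I expect no real obstacle here; the content is entirely carried by Corollary~\ref{cor:degree} and Goto's theorem, and the proof is a short bookkeeping argument combining them.
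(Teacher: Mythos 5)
Your proof is correct and follows essentially the paper's own route: the paper obtains this corollary precisely by combining Goto's theorem (ruling out total Milnor number $21$, since all prime divisors of $\disc(R)$ are at most $19$) with Corollary~\ref{cor:degree} (which for total Milnor number $20$ gives $2p \mid d$). Your bookkeeping on the prime divisors of $\disc(R)$ and the verification of the hypotheses of Corollary~\ref{cor:degree} match the paper's intended argument.
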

In particular, a sextic plane curve $C\subset \P^2$ or a quartic surface
$S\subset \P^3$  in characteristic $p>19$ with only rational double points as its singularities
has total Milnor number $\le 19$.
Yang~\cite{Yang1, Yang2} classified all possible configurations of rational double points
on sextic plane curves and quartic surfaces in characteristic $0$.
It would be interesting to investigate
Yang's  classification   in characteristic $p>19$.
See~\cite{ShimadaNormal} for a result on this problem.
\par
%
%
%
%
In our previous paper~\cite{ShimadaZhang},
we have proved that normal $K3$ surfaces with ten ordinary cusps
exist only in characteristic $3$.
This implies that
the set of $10A_2$-supersingular $K3$ primes is empty.
More generally, the proof of Dolgachev-Keum \cite[Lemma 3.2]{DK}
shows that, if $\disc (R)$ is a perfect square integer, then
there exist no $R$-supersingular $K3$ primes.
(See~Lemma~\ref{sqfree}.)
\par
There are $3058$ Dynkin types  of rank $20$.
Among them, there exist
$2437$ Dynkin types $R$ such that $\disc (R)$ is not a perfect square integer,
and $483$ Dynkin types with non-empty set of $R$-supersingular $K3$ primes.
\par
%
%
This paper is organized as follows.
In~Section~\ref{sec:lattice},
we reduce the problem of determining $R$-supersingular $K3$ primes
to the calculation of overlattices of $L(R)$ and their quadratic forms.
In~Section~\ref{sec:fqf},
we investigate how the multiplications by odd prime integers affects
the isomorphism classes of finite quadratic forms.
In~Section~\ref{sec:algorithm},
we  present an algorithm to calculate the set of $R$-supersingular $K3$ primes.
In the last section,
we explain the algorithm in detail by using an example.
\par
%
The study of the cases where $p$ is $2$ or divides $\disc (R)$
seems to need more subtle methods, and hence we do not treat these cases.
\par \vskip 1pc \noindent
{\bf Acknowledgement.}
We would like to thank the referees for the valuable suggestions and pointing out the reference
of C. T. C. Wall \cite{MR0156890} about quadratic forms.
\section{The N\'eron-Severi lattices of supersingular $K3$ surfaces}\label{sec:lattice}
A free $\Z$-module $\lat$ of finite rank with a non-degenerate
symmetric bilinear form
$\lat\times \lat \to \Z$
is called a \emph{lattice}.
Let $\lat$ be a lattice.
The \emph{dual lattice} $\lat\dual$ of $\lat$ is the $\Z$-module
$\Hom (\lat, \Z)$.
Then $\lat$ is naturally embedded into $\lat\dual$ as a submodule of finite index.
There exists a natural  $\Q$-valued symmetric bilinear form
on $\lat\dual$
that extends the $\Z$-valued symmetric bilinear form on $\lat$.
An \emph{overlattice} of $\lat$ is a submodule $N$ of $\lat\dual$
containing $\lat$
such that the bilinear form on $\lat\dual$
takes values in $\Z$ on $N\times N$.
If $\lat$ is a sublattice of a lattice $\lat\sprime$ with finite index,
then $\lat\sprime$ is embedded into $\lat\dual$ in a natural way,
and hence $\lat\sprime$ can be regarded as an overlattice of $\lat$.
\par
We say that $\lat$ is \emph{even} if $u^2\in 2\Z$ holds for every $u\in \lat$.
Let $\lat$ be an even negative-definite lattice.
A vector $r\in \lat$ is called a \emph{root}
if $r^2=-2$.
We denote by $\Roots (\lat)$ the set of roots in $\lat$.
Let $R$ be a Dynkin type.
Recall that $L(R)$ is the  negative definite root lattice of type $R$.
We put
\begin{eqnarray*}
\OLL(R)&:=&\set{\wt{L}}{\textrm{$\wt{L}$ is an even overlattice of $L(R)$}},\\
\OL(R)&:=&\set{\wt{L}\in \OLL(R)}{\Roots (\wt{L})=\Roots(L(R))}.
\end{eqnarray*}
Remark that we consider $\OLL(R)$ as a subset of
the set of submodules of $L(R)\dual$, and \emph{not} up to isometries of lattices.
\par
Let $D$ be a finite abelian group.
A \emph{quadratic form} $q$ on $D$ is, by definition,
a map $q: D\to \Q/2\Z$ that satisfies the following:
\begin{itemize}
\item[(i)] $q(nx)=n^2 q(x)$ for any $x\in D$ and any $n\in \Z$, and
\item[(ii)] the map $b[q]: D\times D\to \Q/\Z$ defined by
$$
b[q](x, y):= (q(x+y)-q(x)-q(y))/2
$$
is  a symmetric bilinear
form on $D$.
\end{itemize}
See Wall~\cite{MR0156890} for the complete classification of quadratic forms on finite abelian groups.
Let $q$ be a quadratic form on a finite abelian group $D$,
and let $H$ be a subgroup of $D$.
We put
$$
H\sperp :=\set{x\in D}{b[q](x, y)=0\;\;\textrm{for any}\; y\in H}.
$$
We say that $q$ is \emph{non-degenerate} if $D\sperp=\{0\}$.
\par
Let $\lat$ be an even lattice.
The finite abelian group $\lat\dual/\lat$ is called the \emph{discriminant group}
of $\lat$, and is denoted by $D_\lat$.
We define a quadratic form $q_\lat$ on $D_\lat$ by
$$
q_\lat (\bar x)=x^2\;\bmod\,2\Z\qquad (\textrm{where}\;\bar x:= x\bmod\,\lat\in D_\lat\;\textrm{for $x\in
\lat\dual$}),
$$
and call $q_\lat$ the \emph{discriminant form} of $\lat$.
It is easy to see that $q_\lat$ is non-degenerate, and that
$$
|D_\lat|=|\disc (\lat)|
$$
holds.
By Nikulin~\cite{Nikulin},
the map
$$
N\mapsto N/\lat
$$
induces a
 bijection  from the set of even overlattices of $\lat$ to
the set of isotropic subgroups of $(D_\lat, q_{\lat})$.
In particular,
we can calculate the set of even overlattices of
a given lattice by calculating the  set of isotropic subgroups
of its discriminant forms.
Let $N$ be an even overlattice of $\lat$.
Then we have a natural sequence of inclusions
$$
\lat\;\;\hookrightarrow\;\;N\;\;\hookrightarrow\;\; N\dual\;\; \hookrightarrow\;\; \lat\dual.
$$
Therefore $\disc (N)$ divides $\disc(\lat)$,
and the exponent of $D_N$ divides the exponent of $D_\lat$.
For a prime $p$,
we denote by $(D_\lat)_p$ and $(D_\lat)_{p\sprime}$ the $p$-part and the prime-to-$p$ part of
$D_\lat$, and by $(q_\lat)_p$ and $(q_\lat)_{p\sprime}$ the restrictions of $q$ to
$(D_\lat)_p$ and to $(D_\lat)_{p\sprime}$, respectively.
We have the following
orthogonal decomposition:
$$
(D_\lat, q_\lat)=((D_\lat)_p, (q_\lat)_p)\oplus ((D_\lat)_{p\sprime}, (q_\lat)_{p\sprime}).
$$
%
%
%
\par
We now state the main theorem of this section.
\begin{theorem}\label{Th2.1} 
Let $R$ be a Dynkin type with $\rank (R)=20$,
and let $p$ be an odd prime  that does not divide $\disc (R)$.
Then the following three conditions are equivalent:
\begin{itemize}
\item[(1)] $p$ is an $R$-supersingular $K3$ prime.
\item[(2)]
The unique supersingular $K3$ surface $X_p$ of Artin invariant
$1$
in characteristic $p$
is birational to a normal $K3$ surface $Y$ such that $\Sing(Y)$ is of Dynkin type $R$.
\item[(3)] There exist
an overlattice $\wt{L}\in \OL (R)$ and  a lattice $T\sprime$ of rank $2$ with signature $(1,1)$
such that $(D_{T\sprime}, p q_{T\sprime})$ is isomorphic to $(D_{\wt{L}}, -q_{\wt{L}})$,
where $p q_{T\sprime}$ is the discriminant form of $T\sprime$ multiplied by $p$.
\end{itemize}
\end{theorem}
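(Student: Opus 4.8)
The plan is to establish the cycle $(2)\Rightarrow(1)\Rightarrow(3)\Rightarrow(2)$. The implication $(2)\Rightarrow(1)$ is immediate: a birational model as in $(2)$ is itself a normal $K3$ surface with $\Sing(Y)$ of type $R$, and $p$ already satisfies condition $(i)$ by hypothesis. So the substance lies in $(1)\Rightarrow(3)$, which in passing will force the Artin invariant to equal $1$ and thereby promote $(1)$ to $(2)$, and in the geometric realization $(3)\Rightarrow(2)$.

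For $(1)\Rightarrow(3)$, let $Y$ be as in $(1)$, let $X\to Y$ be the minimal resolution, and keep the notation $L_Y\cong L(R)$, $T_Y=L_Y\sperp\st\NS(X)$, $\wt L_Y=T_Y\sperp$ from the introduction, so that $T_Y$ is even of signature $(1,1)$ and $\wt L_Y\in\OL(R)$. Since $T_Y$ and $\wt L_Y$ are orthogonal complements, they are primitive in $\NS(X)$, so by Nikulin's overlattice calculus \cite{Nikulin} the even overlattice $\NS(X)$ of $\wt L_Y\oplus T_Y$ is the graph $H$ of an anti-isometry $\phi$ between a subgroup $H_1\st D_{\wt L_Y}$ and a subgroup $H_2\st D_{T_Y}$, with $H$ mapping isomorphically to both. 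As $p\nmid\disc(R)$, the group $D_{\wt L_Y}$, and hence $H$ and $H_2$, has order prime to $p$. Writing $\sigma$ for the Artin invariant of $X$ and comparing $|\disc(\wt L_Y)|\,|\disc(T_Y)|=|H|^2 p^{2\sigma}$ separately on prime-to-$p$ and $p$ parts, I obtain $H_1=D_{\wt L_Y}$ (whence $(D_{T_Y})_{p\sprime}=H_2$ carries the induced form $-q_{\wt L_Y}$) and $(D_{T_Y})_p\cong(\Z/p\Z)^{2\sigma}$. Since $T_Y$ has rank $2$, its discriminant group is generated by at most two elements, forcing $\sigma=1$; then $X$ is the unique supersingular $K3$ surface of Artin invariant $1$, i.e.\ $X\cong X_p$, which is the content of $(1)\Rightarrow(2)$. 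Now $(D_{T_Y})_p\cong(\Z/p\Z)^2$ says that $T_Y\otimes\Z_p$ is $p$ times a unimodular $\Z_p$-lattice, which is precisely the key observation~\eqref{eq:observation}; rescaling the form by $1/p$ produces an even lattice $T\sprime_Y$ (here $p$ odd is used) of signature $(1,1)$ with $\disc(T\sprime_Y)\mid\disc(R)$, and over a prime $q\ne p$ the form of $T_Y$ is $p$ times that of $T\sprime_Y$, so $(q_{T_Y})_{p\sprime}=p\,q_{T\sprime_Y}$. Combined with $(D_{T_Y})_{p\sprime}\cong(D_{\wt L_Y},-q_{\wt L_Y})$, this yields $(D_{T\sprime_Y},p\,q_{T\sprime_Y})\cong(D_{\wt L_Y},-q_{\wt L_Y})$, which is $(3)$ with $\wt L=\wt L_Y$ and $T\sprime=T\sprime_Y$.

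For $(3)\Rightarrow(2)$, given $\wt L\in\OL(R)$ and $T\sprime$ as in $(3)$, I would first take $T$ to be $T\sprime$ with bilinear form multiplied by $p$; then $T$ is even (as $p$ is odd) of signature $(1,1)$, with $(D_T)_p\cong(\Z/p\Z)^2$ and $(q_T)_{p\sprime}=p\,q_{T\sprime}\cong -q_{\wt L}$. Gluing $\wt L\oplus T$ along the graph of this anti-isometry produces, again by \cite{Nikulin}, an even lattice $N$ of signature $(1,21)$ with $D_N\cong(D_T)_p\cong(\Z/p\Z)^2$. By Milgram's formula the discriminant form $q_N$ is, up to isomorphism, forced by the signature $(1,21)$: on a group $(\Z/p\Z)^2$ with $p$ odd there are exactly two quadratic forms, and they are distinguished by the signature invariant modulo $8$, so $q_N$ agrees with the discriminant form of $\NS(X_p)$. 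Since an even indefinite lattice of this rank is determined up to isometry by its signature and discriminant form \cite{Nikulin}, we conclude $N\cong\NS(X_p)$, and this furnishes a primitive embedding $L(R)\inj\NS(X_p)$ with primitive closure $\wt L$, hence with no roots beyond $\Roots(L(R))$. Finally I would invoke the theory of $(-2)$-curves on $K3$ surfaces: after composing with an element of the Weyl group generated by reflections in the roots of $\NS(X_p)$, one may assume the simple roots of $L(R)$ are effective, hence represented by a configuration of smooth rational curves whose dual graph is the Dynkin diagram of $R$; contracting them gives a normal $K3$ surface $Y$ birational to $X_p$, and the equality $\Roots(\wt L)=\Roots(L(R))$ prevents any further curve from being contracted, so $\Sing(Y)$ is of type exactly $R$, which is $(2)$.

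The lattice bookkeeping above is routine once one has Nikulin's gluing description and Milgram's formula. I expect the genuine obstacle to be the concluding step of $(3)\Rightarrow(2)$: arranging the simple roots of $L(R)$ to be represented by effective $(-2)$-curves on $X_p$ and verifying that their contraction yields rational double points of type precisely $R$; this rests on the standard but delicate theory of Weyl chambers and ample cones on supersingular $K3$ surfaces, together with the uniqueness of $X_p$.
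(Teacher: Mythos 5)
Your lattice-theoretic work is essentially sound: the $(1)\Rightarrow(3)$ analysis via Nikulin's glue, the rescaling $T\sprime_Y=\frac1p T_Y$, and, in $(3)\Rightarrow(2)$, the identification $N\cong\NS(X_p)$ via Milgram's formula plus Nikulin's uniqueness theorem for indefinite even lattices is a correct and mildly different route from the paper, which instead quotes Rudakov--Shafarevich for the uniqueness of the even $p$-elementary lattice of signature $(1,21)$ and discriminant $-p^2$. One point you should make explicit in $(1)\Rightarrow(3)$: the discriminant comparison only gives $|(D_{T_Y})_p|=p^{2\sigma}$, not $(D_{T_Y})_p\cong(\Z/p\Z)^{2\sigma}$; without the Artin--Rudakov--Shafarevich theorem that $\NS(X)$ is $p$-elementary (which transfers to $(D_{T_Y})_p$ through the prime-to-$p$ glue), a group such as $\Z/p^2\times\Z/p^2$ is not excluded, and then neither $\sigma=1$ nor the divisibility of the form on $T_Y$ by $p$ follows.

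The genuine gap is exactly where you yourself locate it: the concluding step of $(3)\Rightarrow(2)$. Making the simple roots of $L(R)$ effective by a Weyl-group element does not by itself produce a contraction, and your justification that ``$\Roots(\wt{L})=\Roots(L(R))$ prevents any further curve from being contracted'' does not work: the contraction one actually constructs is the Stein factorization of the morphism given by a nef and big class $H$, and it contracts \emph{all} irreducible curves orthogonal to $H$; these correspond to roots of $h\sperp\cap\NS(X_p)$, a negative definite lattice of rank $21$ containing $\wt{L}$ which may a priori contain roots \emph{outside} $\wt{L}$, so the condition $\Roots(\wt{L})=\Roots(L(R))$ alone controls nothing here. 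The missing argument is the careful choice of polarization and the accompanying root computation: take a primitive $v\in T\dual$ with $v^2<-2$, let $h$ generate $v\sperp\cap T\dual$ (so $h^2>0$), and show $\Roots(h\sperp\cap\Lambda)=\Roots(\wt{L})$ by writing a root $u=r\sprime+t\sprime$ with $t\sprime=mv$ and using $v^2<-2$ to force $m=0$; then use Rudakov--Shafarevich to replace $h$ (after reflections and a sign change) by a class of a nef and big divisor $H$, base-point-freeness of $|2H|$ (Saint-Donat, Nikulin) to get a morphism, and its Stein factorization to obtain $Y$ with $\Ex(f)$, hence $\Sing(Y)$, of type exactly $R$. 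Without this (or an equivalent) argument, the existence of a contraction whose exceptional set has Dynkin type precisely $R$ is unproved, so $(3)\Rightarrow(2)$ remains open in your write-up.
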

%
%
%
%
%
%
A lattice $\Lambda$ is said to be \emph{$p$-elementary}
if its discriminant group $D_{\Lambda}$ is a $p$-elementary  group.
The following results due to
Artin~\cite{Ar} and Rudakov-Shafarevich~\cite{RS2}
reduce our geometric problem to calculations of lattices and finite quadratic forms.
\begin{theorem}[Artin~\cite{Ar} and Rudakov-Shafarevich~\cite{RS2}]
The N\'eron-Severi lattice of a supersingular $K3$ surface in characteristic $p$ is $p$-elementary.
\end{theorem}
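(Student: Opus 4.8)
The plan is to bound the discriminant group $D_{\NS(X)}=\NS(X)\dual/\NS(X)$ one prime at a time. Here $X$ is a supersingular $K3$ surface over an algebraically closed field $k$ of characteristic $p$, so $\NS(X)$ is an even lattice of signature $(1,21)$ whose rank equals the second Betti number $b_2(X)=22$. The primes $\ell\neq p$ are easy: passing to the inverse limit in the Kummer sequence yields an exact sequence
$$
0\;\longrightarrow\;\NS(X)\otimes\Z_\ell\;\longrightarrow\;H^2_{\mathrm{et}}(X,\Z_\ell(1))\;\longrightarrow\;T_\ell\,\mathrm{Br}(X)\;\longrightarrow\;0,
$$
where $H^2_{\mathrm{et}}(X,\Z_\ell)$ is a unimodular $\Z_\ell$-lattice by Poincar\'e duality. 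Since $\rank\NS(X)=b_2(X)$, the group $\mathrm{Br}(X)[\ell^\infty]$ is finite, hence $T_\ell\,\mathrm{Br}(X)=0$, so $\NS(X)\otimes\Z_\ell$ is itself unimodular and $(D_{\NS(X)})_\ell=0$. Thus $\disc(\NS(X))$ is, up to sign, a power of $p$, and the whole content is to show that $(D_{\NS(X)})_p$ is killed by $p$.

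The prime $p$ I would treat through crystalline cohomology. Put $W=W(k)$, let $\sigma$ be its Frobenius, and set $M:=H^2_{\mathrm{cris}}(X/W)$: a free $W$-module of rank $22$ carrying a unimodular symmetric bilinear form $\langle\ ,\ \rangle$ from cup product into $H^4_{\mathrm{cris}}(X/W)\cong W(-2)$, and a $\sigma$-linear Frobenius $\Phi$ with $p^2M\subseteq\Phi M\subseteq M$, with $\langle\Phi x,\Phi y\rangle=p^2\,\sigma(\langle x,y\rangle)$, and with the Hodge divisibilities $\Phi(\tilde F^1)\subseteq pM$, $\Phi(\tilde F^2)\subseteq p^2M$, where $\tilde F^\bullet\subseteq M$ lifts the Hodge filtration and $\rank\tilde F^1=21$, $\rank\tilde F^2=1$. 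By Artin, supersingularity of $X$ is equivalent to all Newton slopes of $\Phi$ being equal to $1$; in that case the crystalline Tate module $T:=\{x\in M\mid\Phi x=px\}$ is a $\Z_p$-lattice of rank $22$, its bilinear form takes values in $\Z_p$ (for $x,y\in T$ one has $p^2\langle x,y\rangle=\langle\Phi x,\Phi y\rangle=p^2\sigma(\langle x,y\rangle)$, so $\langle x,y\rangle\in W\cap\Q_p=\Z_p$), and the crystalline cycle class identifies $\NS(X)\otimes\Z_p$ isometrically, up to sign, with $T$. So the theorem reduces to showing that the finite group $D_T=T\dual/T$ is $p$-elementary.

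To finish I would do the lattice bookkeeping inside $M$. Let $T_W\subseteq M$ denote the $W$-span of $T$; since all slopes equal $1$, $T_W$ has finite index in $M$, and the unimodularity of $M$ gives the chain $T\subseteq T_W\subseteq M=M\dual\subseteq T_W\dual$, so $D_T$ has exponent $p$ if and only if $p\,T_W\dual\subseteq T_W$. In Nikulin's language, $M/T_W$ is an isotropic submodule of the discriminant pairing of $T_W$ with $(M/T_W)^{\perp}=M/T_W$, and $T_W\dual/T_W$ is the extension of its dual $T_W\dual/M$ by $M/T_W$. Choosing a $W$-basis of $T_W$ on which $\Phi$ acts as $p\,\sigma$, the relations $p^2M\subseteq\Phi M\subseteq M$ together with the Hodge divisibilities constrain $M$ relative to $T_W$ and allow one to deduce that $pM\subseteq T_W$; dualizing gives $p\,T_W\dual\subseteq M$, and then a closer analysis of the bilinear form induced on $M/pM$ — the characteristic subspace of the supersingular $K3$ crystal $M$ — rules out an exponent-$p^2$ contribution and yields $p\,T_W\dual\subseteq T_W$.

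I expect this last step to be the real obstacle. Everything up through the identification $\NS(X)\otimes\Z_p\cong T$ is essentially formal once Artin's slope characterization is granted; but passing from the isocrystal statement ``all slopes equal $1$'' to the integral statement ``$T\dual/T$ has exponent exactly $p$'' requires carefully tracking how the Hodge divisibilities pin $M$ down between $T_W$ and $p^{-1}T_W$ and how the form on $M/pM$ behaves. This is precisely what the structure theory of supersingular $K3$ crystals accomplishes, and it is where the work of Artin~\cite{Ar} and of Rudakov--Shafarevich~\cite{RS2} lies; the same analysis gives the sharper fact that $D_T\cong(\Z/p\Z)^{2m}$ for an integer $m$ with $1\le m\le 10$, the Artin invariant.
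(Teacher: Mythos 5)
First, note that the paper does not prove this statement at all: it is imported verbatim from Artin~\cite{Ar} and Rudakov--Shafarevich~\cite{RS2}, so there is no internal proof to compare against; the only fair evaluation is of your sketch on its own terms. Your prime-to-$p$ argument is fine: since $X$ is supersingular in Shioda's sense, $\rank\NS(X)=b_2=22$, the Kummer sequence gives $T_\ell\,\mathrm{Br}(X)=0$, and $\NS(X)\otimes\Z_\ell$ is identified with the unimodular lattice $H^2_{\mathrm{et}}(X,\Z_\ell(1))$, so the discriminant is $\pm$ a power of $p$. That part is standard and correct.

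The gap is in the $p$-part, in two places. First, the assertion that the cycle class map identifies $\NS(X)\otimes\Z_p$ with the Tate module $T=\{x\in M\mid \Phi x=px\}$ ``essentially formally'' is not right: formally one only gets an injection of $\NS(X)\otimes\Z_p$ into $T$ with finite cokernel, and surjectivity (equivalently, $p$-saturation of the image) is itself a substantive theorem, proved via flat cohomology $H^2_{\mathrm{fl}}(X,\Z_p(1))$ and its comparison with the crystal (Artin, Ogus), not a consequence of the slope computation. This matters because $p$-elementarity does not descend to finite-index sublattices: if $\NS(X)\otimes\Z_p$ had index $p^k>1$ in $T$, then even knowing $pT\dual\subseteq T$ would say nothing about $D_{\NS(X)}$, so without this identification your reduction collapses. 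Second, the step you yourself flag as ``the real obstacle'' --- deducing from the Hodge divisibilities and $p^2M\subseteq\Phi M\subseteq M$ that $T\dual/T$ is killed by $p$ --- is exactly the content of the theorem; your text describes the shape of the argument (the characteristic subspace of $M/pM$, the structure theory of supersingular $K3$ crystals) but does not carry it out, and explicitly defers it to the cited works. So what you have is a correct reduction of the statement to the two deep inputs that Artin, Rudakov--Shafarevich and Ogus actually prove, rather than a proof; as a justification for citing the theorem it is a reasonable roadmap, but it should not be presented as a proof in which only ``bookkeeping'' remains.
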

Combining this result with the classification of indefinite lattices,
we have the following.
\begin{corollary}[Rudakov-Shafarevich~\cite{RS2}]
The isomorphism class of the N\'eron-Severi lattice $\NS(X)$ of a supersingular $K3$ surface $X$  is
uniquely
determined by the characteristic $p$ of the base field and the Artin invariant of $X$.
\end{corollary}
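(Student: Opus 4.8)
The plan is to identify $N:=\NS(X)$ up to isometry through the genus theory of even lattices, so that the isometry class is forced by a short list of numerical invariants. First I would collect the invariants of $N$. Since $X$ is a $K3$ surface, $N$ is an even lattice; since $X$ is supersingular its Picard number is $22$, so $N$ has signature $(1,21)$ and in particular is indefinite with both indices positive. By the theorem of Artin and Rudakov--Shafarevich recalled above, $N$ is $p$-elementary, i.e.\ $p\cdot D_N=0$. By the definition of the Artin invariant $\sigma$ we have $\disc(N)=-p^{2\sigma}$, and since $|D_N|=|\disc(N)|$ this forces $|D_N|=p^{2\sigma}$; combined with $p$-elementarity, $D_N\cong(\Z/p\Z)^{2\sigma}$ as an abelian group, so the minimal number of generators $\ell(D_N)$ equals $2\sigma$. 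Invoking Artin's bound $1\le\sigma\le 10$ (\cite{Ar}) we get $\ell(D_N)=2\sigma\le 20=\rank(N)-2$, which is the inequality needed at the end.

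Next I would show that the discriminant form $q_N$ on $D_N\cong(\Z/p\Z)^{2\sigma}$ is determined by $p$ and $\sigma$ alone. For $p$ odd, non-degenerate finite quadratic forms on an elementary abelian $p$-group of a given rank are classified (Wall~\cite{MR0156890}, Nikulin~\cite{Nikulin}) by that rank together with the class of the form's determinant in $\F_p^\times/(\F_p^\times)^2$; for the discriminant form of an even lattice this class is pinned down by $\disc(N)$ modulo squares prime to $p$ and by the signature of $N$ modulo $8$. As both $\disc(N)=-p^{2\sigma}$ and the signature $(1,21)$ depend only on $p$ and $\sigma$, so does $q_N$; concretely the relevant class is that of $-1\bmod p$, governed by $\left(\frac{-1}{p}\right)$. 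For $p=2$ one instead uses the classification of $2$-elementary even lattices, where an additional $\{0,1\}$-valued type invariant enters; here one must argue, as in Rudakov--Shafarevich~\cite{RS2}, that supersingularity forces a single value of that invariant.

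Finally I would invoke Nikulin's uniqueness theorem for indefinite lattices: an even lattice $\Lambda$ that is indefinite with both indices positive and satisfies $\rank(\Lambda)\ge \ell(D_\Lambda)+2$ is the unique lattice in its genus, hence is determined up to isometry by its signature and its discriminant form $(D_\Lambda,q_\Lambda)$. The first paragraph verifies the rank condition for $N$, and the first two paragraphs show that the signature and the discriminant form of $N$ depend only on $p$ and $\sigma$; therefore the isometry class of $\NS(X)$ is a function of $p$ and $\sigma$ alone, as claimed.

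The main obstacle is the second step: one has to pass from the mere group structure of $D_N$ to its quadratic form, which requires the precise classification of finite quadratic forms (Wall/Nikulin) and a careful extraction of the relevant determinant (resp.\ type) invariant from the known data $\disc(N)=-p^{2\sigma}$ together with the signature, plus a genuinely separate treatment of $p=2$ using the known structure of supersingular $K3$ surfaces in characteristic $2$. A secondary point is to confirm that the rank inequality $\ell(D_N)=2\sigma\le 20$ — equivalently Artin's bound $\sigma\le 10$ — still places $N$ within the hypotheses of Nikulin's theorem in the extremal case $\sigma=10$, where it holds with equality.
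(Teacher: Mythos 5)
Your argument is correct and is essentially the route the paper takes: the paper's one-line proof just combines $p$-elementarity with the classification of indefinite even lattices, and your use of Nikulin's genus-uniqueness criterion (applicable since $\ell(D_{\NS(X)})=2\sigma\le 20=\rank(\NS(X))-2$, even in the extremal case $\sigma=10$) together with the classification of finite quadratic forms (where the Milgram signature condition mod $8$ singles out the form for odd $p$) merely spells out that citation. Your deferral of the characteristic-$2$ type invariant to Rudakov--Shafarevich matches the paper, which likewise relies on \cite{RS2} for that case.
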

\begin{corollary}[Rudakov-Shafarevich~\cite{RS2}]\label{cor:RS2}
Suppose that $p$ is odd.
If $\Lambda$ is an even $p$-elementary
lattice of signature $(1, 21)$ and discriminant $-p^{2\sigma}$,
then $\Lambda$ is isomorphic to the N\'eron-Severi lattice of a supersingular $K3$ surface
in characteristic $p$ with Artin invariant $\sigma$.
\end{corollary}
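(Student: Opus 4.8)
\medskip

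The plan is to reduce the corollary to two independent ingredients --- a purely lattice-theoretic classification and a geometric realisation --- linked by Nikulin's theory of discriminant forms~\cite{Nikulin} together with the two theorems of Artin~\cite{Ar} and Rudakov--Shafarevich~\cite{RS2} quoted above.

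For the lattice-theoretic part I would prove that, for a fixed odd prime $p$, there is at most one isomorphism class of even $p$-elementary lattices of signature $(1,21)$ with discriminant $-p^{2\sigma}$, and that this forces $1\le\sigma\le 10$. If $\Lambda$ is such a lattice, then $p$-elementarity together with $|D_\Lambda|=p^{2\sigma}$ gives $D_\Lambda\cong(\Z/p)^{2\sigma}$, so $D_\Lambda$ needs exactly $2\sigma$ generators. Since an even unimodular lattice has signature divisible by $8$ while $1-21=-20$ is not, we must have $\sigma\ge 1$; the bound $\sigma\le 10$, equivalently $\rank(\Lambda)\ge\ell(D_\Lambda)+2$, and the exclusion of $\sigma=11$ I would get from Nikulin's existence criterion (the boundary case being incompatible with evenness). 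In the range $1\le\sigma\le 10$ Nikulin's uniqueness theorem says that an even indefinite lattice is determined up to isomorphism by its signature and its discriminant form $q_\Lambda$, so it suffices to see that $q_\Lambda$ is forced. Over the field $\Z/p$ (recall $p$ is odd) a non-degenerate quadratic form on $(\Z/p)^{2\sigma}$ is classified by its rank $2\sigma$ and the square class of its determinant, hence there are exactly two, and these are distinguished by the sign of the normalised Gauss sum $|D_\Lambda|^{-1/2}\sum_{x\in D_\Lambda}\exp(\pi\sqrt{-1}\,q_\Lambda(x))$. By Milgram's formula that Gauss sum equals $\exp(\pi\sqrt{-1}\,\mathrm{sign}(\Lambda)/4)$ with $\mathrm{sign}(\Lambda)=1-21$, so it depends only on $p$ and $\sigma$; therefore $q_\Lambda$ is the same for all such $\Lambda$, and uniqueness follows.

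For the geometric realisation I would note that for each $\sigma$ with $1\le\sigma\le 10$ there exists a supersingular $K3$ surface $X$ in characteristic $p$ of Artin invariant $\sigma$; this is classical (explicit Fermat and Kummer surfaces for the small values, specialisation inside the supersingular locus for the rest, or a direct appeal to Rudakov--Shafarevich). By the theorem of Artin and Rudakov--Shafarevich quoted above $\NS(X)$ is even and $p$-elementary; it has signature $(1,21)$ because $X$ is supersingular, and discriminant $-p^{2\sigma}$ by the definition of the Artin invariant. Thus $\NS(X)$ falls under the classification of the previous paragraph. Combining the two: given $\Lambda$ as in the statement, the lattice-theoretic step forces $1\le\sigma\le 10$ and shows $\Lambda$ is isomorphic to $\NS(X)$ for the surface $X$ just produced, which is precisely the conclusion.

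I expect the main obstacle to lie in the discriminant-form bookkeeping of the first step: evaluating the Gauss sum and verifying that $\mathrm{sign}(\Lambda)\bmod 8$ together with $|D_\Lambda|=p^{2\sigma}$ genuinely singles out one of the two quadratic forms on $(\Z/p)^{2\sigma}$, and carrying out the rank estimates needed to invoke Nikulin's uniqueness theorem and to rule out $\sigma\notin\{1,\dots,10\}$. The geometric existence statement used in the second step is deep but entirely standard and can simply be cited.
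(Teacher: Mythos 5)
The paper itself gives no argument for this corollary: it is quoted wholesale from Rudakov--Shafarevich \cite{RS2} (the sentence ``combining this result with the classification of indefinite lattices'' refers to the uniqueness corollary preceding it), so there is no internal proof to compare against. Your two-step reduction is the standard route and is essentially sound: (a) for $1\le\sigma\le 10$ one has $\ell(D_\Lambda)=2\sigma\le \rank(\Lambda)-2$, the two possible $p$-elementary quadratic forms on $(\Z/p\Z)^{2\sigma}$ have signatures differing by $4$ bmod $8$, so Milgram's formula together with $\operatorname{sign}(\Lambda)=-20$ pins down $q_\Lambda$, and Nikulin's uniqueness theorem for indefinite even lattices with $\ell(D)\le\rank-2$ gives uniqueness of $\Lambda$; (b) matching $\Lambda$ with $\NS(X)$ for a supersingular $K3$ of Artin invariant $\sigma$ then follows from the quoted $p$-elementarity theorem. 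Two caveats. First, be aware that ingredient (b) --- the existence, for every odd $p$ and every $\sigma\in\{1,\dots,10\}$, of a supersingular $K3$ surface with Artin invariant exactly $\sigma$ --- is precisely the geometric content of the Rudakov--Shafarevich theorem being cited; it does not come from explicit examples plus ``specialisation'' (under specialisation the Artin invariant can only drop, so naive specialisation from small-$\sigma$ examples gives nothing new), but from their deformation/moduli results. So your argument is best viewed as an unpacking of the citation, with the lattice-theoretic half being the part you actually prove. Second, your exclusion of $\sigma=11$ needs a cleaner argument than ``the boundary case is incompatible with evenness'': if $\ell(D_\Lambda)=22$ then $\Lambda=p\Lambda^{\vee}$, so $\Lambda\cong M(p)$ where $M$ is a unimodular lattice of signature $(1,21)$; since $p$ is odd, evenness of $\Lambda$ forces $M$ even, which is impossible because $-20\not\equiv 0 \bmod 8$. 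With that fix the lattice-theoretic half is complete, and this case does need to be treated, since the corollary as stated would be false if such a $\Lambda$ existed.
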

%
%
%
The following easy result will be used in the proof of Theorem \ref{Th2.1}.
\begin{lemma}\label{TT'}
Let $T' = \Z t_1'\oplus \Z t_2'$ be a  lattice of rank $2$ with the intersection matrix
$(t_i'. t_j') = (t_{ij}')$.
Let $p$ be a prime that does not divide $\disc(T')$.
Define a lattice $T = \Z t_1\oplus  \Z t_2$ so that the intersection matrix
$(t_i . t_j) = (t_{ij})$ with $t_{ij} = p t_{ij}'$.
Then the following hold.
\begin{itemize}
\item
[(1)] $((D_T)_{p'}, (q_{T})_{p'}) \cong (D_{T\sprime}, p q_{T\sprime})$.
\item
[(2)] There exist positive integers $\ell_1$ and $\ell_2$ such that
$$T^{\vee}/T \cong \Z/(p \ell_1) \oplus \Z/(p \ell_2),
\hskip 0.5pc
(T')^{\vee}/T' \cong \Z/(\ell_1) \oplus \Z/(\ell_2).$$
\end{itemize}
\end{lemma}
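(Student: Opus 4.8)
The plan is to put both lattices on a single underlying $\Z$-module. Write $M := \Z t_1 \oplus \Z t_2$ and identify it with $\Z t_1' \oplus \Z t_2'$ via $t_i \leftrightarrow t_i'$; then $T$ and $T'$ are the two lattice structures on $M$ whose bilinear forms satisfy $(x,y)_T = p\,(x,y)_{T'}$ for all $x,y \in M$. Inside the $\Q$-vector space $M_\Q := M \otimes \Q$, the dual $(T')^\vee$ consists of those $x$ with $(x,y)_{T'} \in \Z$ for every $y \in M$; since scaling the form by $p$ merely scales this condition, one gets $T^\vee = \tfrac1p (T')^\vee$. Hence there is a chain $M \subseteq (T')^\vee \subseteq \tfrac1p(T')^\vee = T^\vee$ of subgroups of $M_\Q$, and this exhibits $D_{T'} = (T')^\vee/M$ as a subgroup of $D_T = T^\vee/M$.

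For part (1), I would first record orders: $\disc(T) = p^2\disc(T')$ since the rank is $2$, so $|D_T| = p^2|D_{T'}|$, and $|D_{T'}|$ is prime to $p$ by hypothesis. The subgroup $(T')^\vee/M \subseteq D_T$ therefore has order $|D_{T'}|$, which is the prime-to-$p$ part of $|D_T|$; a subgroup of order prime to $p$ must lie inside $(D_T)_{p'}$, and here it has the full order, so it equals $(D_T)_{p'}$. It then remains to match the quadratic forms, and this is immediate on the nose: for $x \in (T')^\vee$,
$$
q_T(x \bmod M) = (x,x)_T \bmod 2\Z = p\,(x,x)_{T'} \bmod 2\Z = p\cdot q_{T'}(x \bmod M),
$$
where multiplication by the integer $p$ on $\Q/2\Z$ is well defined. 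So the inclusion $(T')^\vee/M \hookrightarrow D_T$ identifies $((D_T)_{p'},(q_T)_{p'})$ with $(D_{T'}, p\,q_{T'})$.

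For part (2), I would use the Smith normal form of the Gram matrix $A := (t_{ij}')$: choose $U, V \in \GL_2(\Z)$ with $UAV = \operatorname{diag}(\ell_1, \ell_2)$ for positive integers $\ell_1 \mid \ell_2$ (positive since $T'$ is non-degenerate). Identifying $(T')^\vee/T'$ with $\operatorname{coker}(A)$ in the usual way, one gets $(T')^\vee/T' \cong \Z/\ell_1 \oplus \Z/\ell_2$. Since the Gram matrix of $T$ is $pA$ and $U(pA)V = \operatorname{diag}(p\ell_1, p\ell_2)$ is still in Smith normal form (as $p\ell_1 \mid p\ell_2$), the same computation yields $T^\vee/T \cong \Z/(p\ell_1) \oplus \Z/(p\ell_2)$.

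I do not expect any step to be a genuine obstacle — the lemma is elementary. The one place to be a little careful is the bookkeeping in part (1): one must keep the honest equality $(D_T)_{p'} = (T')^\vee/M$ of subgroups of $D_T$, rather than a mere abstract isomorphism, so that the displayed formula for $q_T$ transports verbatim to $p\,q_{T'}$. Everything else is a direct computation.
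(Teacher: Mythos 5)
Your proof is correct, and at its core it runs on the same mechanism as the paper's: scaling the bilinear form by $p$ scales the dual lattice by $1/p$, so $(T')^\vee/T'$ sits inside $T^\vee/T$ as a genuine subgroup and carries the form $p\,q_{T'}$. The differences are in the bookkeeping. The paper computes explicit dual bases via the adjugate of the $2\times 2$ Gram matrix, checks on generators that $(t_i')^*+T'\mapsto p\,t_i^*+T$ is an isomorphism onto $(T^\vee/T)_{p'}$, and deduces (2) from $\disc(T)=p^2\disc(T')$ together with the fact (read off from the denominators of the dual basis) that $(T^\vee/T)_p$ is $p$-elementary. You replace the generator check by an order count --- $(T')^\vee/M$ has order $|D_{T'}|$, which is prime to $p$ and equals the prime-to-$p$ part of $|D_T|=p^2|D_{T'}|$, hence it must coincide with $(D_T)_{p'}$ --- and you obtain (2) from the Smith normal form of the Gram matrix, using that $\mathrm{diag}(\ell_1,\ell_2)$ becomes $\mathrm{diag}(p\ell_1,p\ell_2)$ after scaling. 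Both steps are sound, and your insistence on the honest equality of subgroups (rather than an abstract isomorphism) is exactly what makes the identity $q_T|_{(T')^\vee/M}=p\,q_{T'}$ transport on the nose, which is the same point the paper makes via its explicit map. A small bonus of your version of (1) is that it uses rank $2$ only through $\disc(T)=p^{2}\disc(T')$, so the argument generalizes verbatim to higher rank, whereas the paper's adjugate computation is specific to $2\times 2$ matrices; your Smith normal form argument for (2) is also a slightly cleaner justification than the paper's appeal to the shape of the dual basis.
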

\begin{proof}
Since $\rank(T') = 2$, we can write
$(T')^{\vee}/T' \cong \Z/(\ell_1) \oplus \Z/(\ell_2)$ so that $\ell_i > 0$, \
$\ell_1 | \ell_2$ and $\disc(T') = \det(t_{ij}') = \ell$
with $|\ell| = \ell_1 \ell_2$.
We can calculate the dual bases of $T'$ and $T$ as follows,
where $s_{11} = t_{22}'$, $s_{22} = t_{11}'$ and $s_{12} = s_{21} = -t_{12}' = -t_{21}'$:
$$((t_1')^*, (t_2')^*) = (t_1', t_2') (t_{ij}')^{-1} = \frac{1}{\ell} (t_1', t_2') (s_{ij}),$$
$$(t_1^*, t_2^*) = (t_1, t_2) (t_{ij})^{-1} = \frac{1}{p^2 \ell} (t_1, t_2) ( p s_{ij}).$$
Note that $(T^{\vee}/T)_{p'}$ is generated by
(cosets of) the two coordinates of the vector:
$$(p t_1^*, p t_2^*) = \frac{p}{\ell} (t_1, t_2) (s_{ij}).$$
Set $b_{T'} = b[q_{T'}]$, etc. Then
$$(b_{T'}((t_i')^* , (t_j')^*)) = (t_{ij}')^{-1} = \frac{1}{\ell}(s_{ij}),$$
$$(b_T(t_i^* , t_j^*)) = (t_{ij})^{-1} = \frac{1}{p^2 \ell}(p s_{ij}).$$
One can check that the following is an isomorphism of abelian groups:
$$(T')^{\vee}/T'  \rightarrow (T^{\vee}/T)_{p'}$$
$$ (t_i')^* + T' \mapsto p t_i^* + T.$$
Under the identification via this map, we have $p q_{T\sprime} = (q_T)_{p'}$.
This proves (1). Clearly, $\disc(T) = \det(t_{ij}) = p^2 \disc(T')$.
Also the expression of the dual basis shows that
$(T^{\vee}/T)_p$ is $p$-elementary. Thus (2) follows.
This proves the lemma.
\end{proof}
The following is the key to the proof of Theorem \ref{Th2.1}.
\begin{proposition}\label{Prop2.2}
Let $R$ be a Dynkin type with $\rank (R)=20$, and
let $L$ be an even overlattice of $L(R)$. Suppose that $p$ is an odd prime and that
$p \not| \ \disc(L)$.
\begin{itemize}
\item[(1)]
Suppose that $L \rightarrow \Lambda$ is a primitive embedding
into an even $p$-elementary lattice of signature $(1, 21)$
with non-cyclic $\Lambda^{\vee}/\Lambda$.
Let $T = L^{\perp}$ be the orthogonal complement
of $L$ in $\Lambda$.
Then {\rm (1a) $\sim$ (1e)} below hold.
\begin{itemize}
\item[(1a)]
$T$ is an even lattice of signature $(1, 1)$ such that $\disc(T) = -p^2 \disc(L)$ and
$T^{\vee}/T \cong \Z /(p \ell_1) \oplus \Z /(p \ell_2)$.
\item[(1b)]
There are a canonical isomorphism
$\varphi: L^{\vee}/L \rightarrow (T^{\vee}/T)_{p'}$
and the relation $(q_T)_{p'} = -q_L$ (after the
identification via $\varphi$).
\item[(1c)]
Write $T = \Z t_1 \oplus \Z  t_2$. Then $(t_i.t_j) = p(t_{ij}')$
for some $t_{ij}' \in \Z $.
\item[(1d)]
Let $T' = \Z t_1' \oplus \Z   t_2'$ be the lattice with the
intersection form $(t_i' . t_j') = (t_{ij}')$. Then
$(D_{T\sprime}, p q_{T\sprime}) \cong ((D_T)_{p'}, (q_{T})_{p'})
\cong (D_{L}, -q_{L})$.
\item[(1e)]
$\Lambda$ is the unique even $p$-elementary lattice of signature $(1, 21)$
and discriminant $-p^2$.
\end{itemize}
\item[(2)]
Conversely, suppose that $T$ is a lattice
of signature $(1, 1)$ satisfying
{\rm (1a) and (1b)}. Then there is a primitive embedding
$L \rightarrow \Lambda$ into the unique $p$-elementary even lattice $\Lambda$
of signature $(1, 21)$ and determinant $-p^2$
such that $T$  is isomorphic to the orthogonal complement $L^{\perp}$ of $L$ in $\Lambda$.
\end{itemize}
\end{proposition}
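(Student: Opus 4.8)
The plan is to establish part (1) by carefully tracking the chain of inclusions associated with the primitive embedding $L\hookrightarrow\Lambda$, and to establish part (2) by invoking Nikulin's gluing criterion for primitive embeddings into even lattices, combined with Corollary~\ref{cor:RS2}. For part (1), I would first argue that $T=L^\perp$ has signature $(1,1)$: since $\Lambda$ has signature $(1,21)$ and $L$ is negative definite of rank $20$, the orthogonal complement of rank $2$ must have signature $(1,1)$. Next I would use the standard fact (Nikulin) that for a primitive sublattice $L$ of an even lattice $\Lambda$ with orthogonal complement $T$, there is a subgroup $\Gamma\subset D_L\oplus D_T$ which is the graph of an anti-isometry between a subgroup of $D_L$ and a subgroup of $D_T$, with $\Lambda/(L\oplus T)\cong\Gamma$. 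Because $p\nmid\disc(L)$, the $p$-part $(D_L)_p=0$, so on the prime-to-$p$ side the gluing must identify all of $D_L$ with a subgroup of $(D_T)_{p'}$; and because $\Lambda$ is $p$-elementary with $|D_\Lambda|=p^{2\sigma}$, the only room left on the $T$ side is the $p$-part, which forces $(D_T)_{p'}\cong D_L$ via an anti-isometry $\varphi$, giving (1b), and forces $D_\Lambda=(D_T)_p$ to be $p$-elementary of order $p^2$ (non-cyclic), i.e. $\sigma=1$. This already yields (1e) via Corollary~\ref{cor:RS2}, and gives $\disc(T)=-p^2\disc(L)$ and the structure $T^\vee/T\cong\Z/(p\ell_1)\oplus\Z/(p\ell_2)$ from (1a).

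For (1c), the claim is that every intersection number $t_i.t_j$ is divisible by $p$; equivalently, the bilinear form on $T$ is $p$ times an integral form. This follows because $(D_T)_p$ being $p$-elementary of order $p^2$ on a rank-$2$ lattice means $T^\vee/T$ has $p$-part $(\Z/p)^2$, so $\frac1p T^\vee\supseteq T$ after rescaling; more precisely, writing things out as in the proof of Lemma~\ref{TT'} in reverse, the condition that the $p$-part is the full $(\Z/p)^2$ forces $T\subseteq pT^\vee$, which is exactly the statement that the Gram matrix is divisible by $p$. Then (1d) is immediate: define $T'$ with Gram matrix $(t'_{ij})=(t_i.t_j)/p$, apply Lemma~\ref{TT'}(1) to get $(D_{T'},pq_{T'})\cong((D_T)_{p'},(q_T)_{p'})$, and compose with $\varphi$ and (1b) to reach $(D_L,-q_L)$.

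For part (2), given $T$ of signature $(1,1)$ satisfying (1a) and (1b), I would form the candidate $\Lambda$ by gluing: take the subgroup $\Gamma\subset D_L\oplus D_T$ that is the graph, over the prime-to-$p$ parts, of the anti-isometry $\varphi^{-1}:(D_T)_{p'}\xrightarrow{\sim}D_L$ from (1b), and let $\Lambda$ be the preimage of $\Gamma$ in $L^\vee\oplus T^\vee$. By Nikulin's correspondence this is an even lattice containing $L\oplus T$ with $L$ (and $T$) primitive in it; its signature is $(1,1)+(0,20)=(1,21)$, and its discriminant group is $D_\Lambda\cong(D_L\oplus D_T)/\Gamma$, whose prime-to-$p$ part cancels entirely (since $\varphi$ is an anti-isometry between the whole $(D_T)_{p'}$ and the whole $D_L$) leaving $D_\Lambda\cong(D_T)_p$, which by (1a) is $p$-elementary of order $p^2$. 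Hence $\Lambda$ is even $p$-elementary of signature $(1,21)$ and discriminant $-p^2$; by Corollary~\ref{cor:RS2} it is the N\'eron--Severi lattice of the supersingular $K3$ surface of Artin invariant $1$ in characteristic $p$, and in particular it is the unique such lattice, so the embedding $L\hookrightarrow\Lambda$ with $L^\perp\cong T$ is as claimed.

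The main obstacle I anticipate is the bookkeeping in part (1) establishing (1c) and pinning down that the gluing is forced to be ``total'' on the prime-to-$p$ side: one has to rule out the possibility that $D_L$ glues only partially, which would leave prime-to-$p$ torsion in $D_\Lambda$ and contradict $p$-elementarity --- this is where the hypotheses $p\nmid\disc(L)$, $\Lambda$ $p$-elementary, and $\Lambda^\vee/\Lambda$ non-cyclic all get used simultaneously. Once that is set up cleanly, everything else is a direct application of Lemma~\ref{TT'} and Corollary~\ref{cor:RS2}.
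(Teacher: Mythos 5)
Your proposal is correct and, despite the different packaging, runs on the same mechanism as the paper's proof: the heart of part (1) is showing that $p$-elementarity of $\Lambda$ forces the glue group $\Lambda/(L\oplus T)$ to be ``total'' on the prime-to-$p$ side, which you do via Nikulin's graph-of-anti-isometry description of primitive embeddings plus an order count, while the paper does the same count by hand with the chain $L\oplus T\subset\Lambda\subset\Lambda^{\vee}\subset L^{\vee}\oplus T^{\vee}$ and the two surjections $\varphi_1,\varphi_2$; the non-cyclicity of $\Lambda^{\vee}/\Lambda$ together with $\rank(T)=2$ then pins down $D_\Lambda\cong(\Z/p)^2$ in both treatments, and your derivation of (1c) from the fact that both invariant factors of $T^{\vee}/T$ are divisible by $p$ (so $T\subset pT^{\vee}$) is exactly the content the paper delegates to Lemma~\ref{TT'}. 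Part (2) is likewise the same gluing construction as in the paper. One slip to correct: in part (2) you write $D_\Lambda\cong(D_L\oplus D_T)/\Gamma$, which has the wrong order (off by a factor $|\Gamma|=|\disc(L)|$) and, taken literally, would not make the prime-to-$p$ part cancel; the correct statement is $D_\Lambda\cong\Gamma^{\sperp}/\Gamma$ inside $D_L\oplus D_T$ (equivalently, compute $\disc(\Lambda)=\disc(L)\disc(T)/[\Lambda:L\oplus T]^2=\disc(T)/\disc(L)=-p^2$ as the paper does). With that formula your cancellation argument goes through verbatim and yields $D_\Lambda\cong(D_T)_p\cong(\Z/p)^2$, so the conclusion you draw is the right one; also note that uniqueness of the even $p$-elementary lattice of signature $(1,21)$ and discriminant $-p^2$ comes from the Rudakov--Shafarevich classification of such lattices rather than from Corollary~\ref{cor:RS2} itself, which supplies the geometric realization.
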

\begin{proof}
Consider the inclusions:
\begin{equation}\label{eq:second*}
 L \oplus T \subset \Lambda \subset \Lambda^{\vee}
\subset L^{\vee} \oplus T^{\vee}.
\end{equation}
Since $L \rightarrow \Lambda$ is a primitive embedding,
its dual is a surjection $\Lambda^{\vee} \rightarrow L^{\vee}$
which factors as $\Lambda^{\vee} \rightarrow L^{\vee} \oplus T^{\vee} \rightarrow L^{\vee}$
where the first is the inclusion in~\eqref{eq:second*} while the second is the projection.
This surjection induces surjections
$\Lambda^{\vee}/(L \oplus T) \rightarrow L^{\vee}/L$ and
$\varphi_1 : (\Lambda^{\vee}/(L \oplus T))_{p'} \rightarrow (L^{\vee}/L)_{p'} = L^{\vee}/L$
where the latter equality is because $\gcd(p, \disc(L)) = 1$.
Since $\Lambda$ is $p$-elementary we have $(\Lambda^{\vee}/\Lambda)_{p'} = 0$
and hence $(\Lambda^{\vee}/(L \oplus T))_{p'} = (\Lambda/(L \oplus T))_{p'}$.
Similarly, we have surjection
$\varphi_2 : (\Lambda^{\vee}/(L \oplus T))_{p'} \rightarrow (T^{\vee}/T)_{p'}$.
On the other hand, the inclusion~\eqref{eq:second*}  and the assumption that $\Lambda$ is $p$-elementary imply
$|L^{\vee}/L| \, |(T^{\vee}/T)_{p'}| = |(\Lambda/(L \oplus T))_{p'}|^2 \ge
|L^{\vee}/L| \, |(T^{\vee}/T)_{p'}|$ where the latter inequality is due to
the surjectivity of both $\varphi_i$. Thus both $\varphi_i$ are isomorphisms.
Set $\varphi = \varphi_2 \, \varphi_1^{-1}: L^{\vee}/L \rightarrow (T^{\vee}/T)_{p'}$.
For every $\overline{r'} \in L^{\vee}/L$, we write $\varphi(\overline{r'}) = \overline{t'}$,
and see that the coset of $r' + t'$
belongs to $(\Lambda/(L \oplus T))_{p'}$.
So $0 = q_{\Lambda}(\overline{r'+t'}) = q_R(\overline{r'}) + (q_{T})_{p'}(\overline{t'})$.
This proves (1b).
\par
Let $e$ be the exponent of the abelian group $L^{\vee}/L$ so that the latter is $e$-torsion.
This $e$ is coprime to $p$ by the assumption.
Then $\Lambda / (L \oplus T)$ ($\cong (L^{\vee} \oplus T^{\vee})/\Lambda^{\vee}$)
is $e$-torsion.
Indeed, for $r' + t' \in \Lambda \subset L^{\vee} \oplus T^{\vee}$,
we have, mod $L \oplus T$, that $e (r' + t') = e t' \in \Lambda \cap L^{\perp} = T$.
So $\Lambda/(L \oplus T)$ equals $(\Lambda/(L \oplus T))_{p'}$ and is isomorphic
to both $L^{\vee}/L$ and $(T^{\vee}/T)_{p'}$ via $\varphi_i$'s;
denote by $r$ the order of these three isomorphic groups,
which is coprime to $p$.
\par
We assert that $T^{\vee}/T$ is $pe$-torsion. Indeed, for
$t' \in T^{\vee}$, we have $e t' \in \Lambda^{\vee}$ and hence
$p e t' \in \Lambda \cap L^{\perp} = T$, because $\Lambda$ is $p$-elementary.
Since $T$ is of rank $2$, we have
$T^{\vee}/T \cong \Z /(p^{\varepsilon_1} \ell_1) \oplus \Z /(p^{\varepsilon_2} \ell_2)$
where $\ell_i \ge 1$,
each $\varepsilon_i \in \{0, 1\}$ and $\gcd(p, \ell_i) = 1$.
Note that $\Lambda^{\vee}/\Lambda \cong (\Z /(p))^{\oplus \lambda}$ for some
$\lambda \ge 2$.
The inclusion~\eqref{eq:second*} above implies that
$- p^{\varepsilon_1 + \varepsilon_2} \ell_1 \ell_2 \disc(L) =
\disc(T) \, \disc(L) = r^2 \disc(\Lambda) = -p^\lambda r^2$.
So $|\Lambda| = -p^2$ and both $\varepsilon_i = 1$.
Note also that $\ell_1 \ell_2 = |(T^{\vee}/T)_{p'}| = r = \disc(L)$.
This proves (1a) and (1e). The assertion (1c) is the observation
in (1a) that $(T^{\vee}/T)_p$ is $p$-elementary, $\rank(T) = 2$
and $\disc(T) = p^2 \times$ (some integer coprime to $p$)
and that the calculation of $T^{\vee}/T$ is essentially
the calculation of the matrix $(t_i . t_j)^{-1}$; see Lemma \ref{TT'}.
The assertion (1d) follows from (1b) and Lemma \ref{TT'}
by noting that $p$ does not divide $\disc(T') = -\disc(L)$.
\par
Next we prove Proposition \ref{Prop2.2} (2).
We define an overlattice $\Gamma$ of $L \oplus T$
by adding elements $r' + t' \in L^{\vee} \oplus T^{\vee}$ such that
$\varphi(\overline{r'}) = \overline{t'}$.
Note that $q_{L \oplus T}(\overline{r'+t'}) = q_L(\overline{r'}) + (q_T)_{p'}(\overline{t'}) = 0$,
so $\Gamma$ is an even overlattice of $L \oplus T$ such that the projections
induce isomorphisms: $L^{\vee}/L \cong \Gamma/(L \oplus T) \cong (T^{\vee}/T)_{p'}$.
Now $|\Gamma| = |L \oplus T|/\disc(L)^2 = \disc(T)/\disc(L) = -p^2$ by (1a).
Consider the inclusion~\eqref{eq:second*} above but with $\Lambda$ replaced by $\Gamma$,
we see that $\Gamma^{\vee}/\Gamma$ equals $(\Gamma^{\vee}/\Gamma)_p$, i.e., it is $p$-torsion
because so is $((L^{\vee} \oplus T^{\vee})/(L \oplus T))_p = (T^{\vee}/T)_p$ by (1a).
So $\Gamma$ is $p$-elementary. It is clear from the construction that
both $L \rightarrow \Gamma$ and $T \rightarrow \Gamma$ are primitive embeddings,
whence $T = L^{\perp}$ in $\Gamma$.
This proves Proposition \ref{Prop2.2}.
\end{proof}
\begin{proof}[Proof of Theorem \ref{Th2.1}]
We now prove Theorem \ref{Th2.1}, the direction $(1) \Longrightarrow (2)$.
So there is a normal $K3$ surface $Y$ defined over an algebraically closed
field $k$ with $\ch(k) = p$ such that $\Sing(Y)$ is of Dynkin
type $R$. Let $f : X \rightarrow Y$ be the minimal resolution and
$\Ex(f)$ the reduced exceptional divisor. Then $\Ex(f)$ is
also of Dynkin type $R$.
Since the Picard number $\rho(X) = \rho(Y) + \#\Ex(f) \ge 21$,
we have $\rho(X) = 22$ (see Artin \cite{Ar}) and hence $X$ is supersingular
in the sense of Shioda \cite{Shioda}. Let $\Lambda$ be the N\'eron-Severi lattice $\NS(X)$ of $X$.
Then
$\Lambda$ is $p$-elementary and $|\Lambda| = -p^{2 \sigma}$ where
$1 \le \sigma \le 10$ is the Artin invariant (Corollary~\ref{cor:RS2}).
Let $L$ denote
the sublattice of $\Lambda$ spanned by numerical equivalence classes of
irreducible components in $\Ex(f)$.
Then we have $L\cong L(R)$.
Let $\wt{L}$ be the closure of the sublattice $L$ in $\Lambda$.
Applying Proposition \ref{Prop2.2} to the primitive
embedding $\wt{L} \rightarrow \Lambda$, we see that
$\sigma = 1$. So Theorem \ref{Th2.1} (2) is true.
\par
Next we prove Theorem \ref{Th2.1}, the direction
$(2) \Longrightarrow (3)$. We use the notation above.
We assert that $\roots(\wt{L}) = \roots(L)$.
Indeed, suppose that $v \in \wt{L}$ is a $(-2)$-vector.
By considering $-v$ and the Riemann-Roch theorem, we may assume that
$v$ is represented by an effective divisor $V$ on $X_p$.
Since this $V$ is perpendicular to the pull back of an ample divisor
on $Y$, our $V$ is contractible to a point on $Y$, whence
$v = [V]$ belongs to $L$. The assertion is proved.
The rest of (3) follows from Proposition \ref{Prop2.2} applied
to the primitive embedding $\wt{L} \rightarrow \Lambda$.
\par
Finally, we prove Theorem \ref{Th2.1}, the direction
$(3) \Longrightarrow (1)$.
Define $T$ as in Lemma \ref{TT'}. Then
Propositions \ref{Prop2.2} (1a) and (1b) are satisfied
by $\wt{L}$ and $T$.
By Proposition \ref{Prop2.2} (both assertions there), there is a primitive embedding
$\wt{L} \rightarrow \Lambda$ into the unique even $p$-elementary
lattice of signature $(1, 21)$ and discriminant $-p^2$ such that $T = \wt{L}^{\perp}$
in $\Lambda$.
We have
$\NS(X_p) = \Lambda$.
Take a primitive element $v$ in $T^{\vee}$ such that $v^2 < -2$.
Let $h$ be a generator of $v^{\perp} \cap T^{\vee}$. So $h^2 > 0$.
We claim that
\begin{equation}\label{eq:rootss}
\roots(h^{\perp} \cap \Lambda) = \roots(\wt{L}).
\end{equation}
It is clear that the left-hand side of~\eqref{eq:rootss} includes the right-hand side of~\eqref{eq:rootss}.
Let $u$ be in the left-hand side of~\eqref{eq:rootss}. Write $u = r' + t'$ with $r' \in \wt{L}^{\vee}$
and $t' \in T^{\vee}$. Then $0 = h . u = h . t'$, whence
$t' \in T^{\vee} \cap h^{\perp} = \Z  [v]$.
So $t' = m v$ for some integer $m$. If $m \ne 0$, then
$-2 = u^2 = (r')^2 + (t')^2 \le m^2 v^2 < -2$, absurd.
So $m = 0$ and $u = r' \in \wt{L}^{\vee} \cap \Lambda = \wt{L}$
and hence $u \in$ the right-hand side of~\eqref{eq:rootss}. The claim is proved.
By considering $-h$ and isometry of $\Lambda$, we may assume
that a positive multiple of $h$ is represented by a nef and big Cartier
divisor $H$ on $X_p$
(see Rudakov-Shafarevich \cite{RS1}). Note that $|2H|$ is base point
free (see Nikulin \cite{Ni}, Proposition 0.1 and Saint-Donat \cite{SD},
Corollary 3.2). Let $f : X_p \rightarrow Y$ be the birational morphism
onto a normal surface, which is the Stein factorization of $\Phi_{|2H|}
: X_p \rightarrow \P^N$ with $N = \dim |2H|$.
Then $f$ is nothing but the contraction of all the curves
perpendicular to $H$. So by the genus formula and the Riemann-Roch theorem,
$\Ex(f)$ contains and consists of all curves representing elements in $\roots(h^{\perp} \cap \Lambda)
= \roots(\wt{L}) = \roots(L(R))$, whence $\Ex(f)$ is of Dynkin type $R$.
Thus $Y$ is a normal $K3$ surface with $\Sing(Y)$ of Dynkin type $R$.
Hence the assertion (1) is true. This completes the proof of Theorem \ref{Th2.1}.
\end{proof}
The following result
imposes on a Dynkin diagram $R$ of rank $20$
a necessary condition for the set of $R$-supersingular $K3$ primes to be non-empty.
The proof follows from the proof of Dolgachev-Keum \cite[Lemma 3.2]{DK}.
We reprove it here for the convenience of the readers.
\begin{lemma}\label{sqfree}
Let $R$  be a Dynkin diagram of rank $20$.
If the set of $R$-supersingular $K3$ primes is non-empty,
then $\disc(R)$ is not a perfect square.
\end{lemma}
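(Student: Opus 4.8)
The plan is to leverage Theorem~\ref{Th2.1}, which tells us that if $p$ is an $R$-supersingular $K3$ prime, then there exist an overlattice $\wt L\in\OL(R)$ and a lattice $T\sprime$ of rank $2$ and signature $(1,1)$ with $(D_{T\sprime}, p\,q_{T\sprime})\cong(D_{\wt L}, -q_{\wt L})$. Comparing orders of the two finite groups gives $|\disc(T\sprime)| = |\disc(\wt L)|$, and since $\wt L$ is an overlattice of $L(R)$ we know $\disc(\wt L)$ divides $\disc(R)$. The key point I would extract is the relation between $\disc(T\sprime)$ and $\disc(R)$ together with the sign: because $T\sprime$ has signature $(1,1)$, $\disc(T\sprime)$ is \emph{negative}, while $L(R)$ and all its even overlattices are negative definite of even rank $20$, so $\disc(\wt L)$ is \emph{positive}. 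Hence if $\disc(R)$ were a perfect square, then $\disc(\wt L)$ would be a positive perfect square for every overlattice (dividing $\disc(R)$ — and I would need to note that a divisor of a perfect square need not itself be a square, so this naive argument is not quite enough, which is where the real work lies).

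**First I would** pin down the precise obstruction. The cleanest route is $p$-adic: the isomorphism $(D_{T\sprime}, p\,q_{T\sprime})\cong(D_{\wt L}, -q_{\wt L})$ of finite quadratic forms must hold $l$-component by $l$-component for every prime $l$. Multiplication by $p$ rescales the form but, at a prime $l\ne p$, it acts as an invertible scalar on $(D_{T\sprime})_l$, hence changes the discriminant of the $l$-part by the square class of $p$ modulo $l$. Applying the theory of finite quadratic forms (Wall~\cite{MR0156890}, Nikulin~\cite{Nikulin}), the total discriminant form of an even lattice of signature $(s_+,s_-)$ satisfies an oddity/signature congruence (Milgram), and in particular the $p$-adic discriminant of $T\sprime$ relates to that of $\wt L$ via the Hilbert symbol $(p, \disc)_l$. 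Tracking this, the existence of the required $T\sprime$ forces $\disc(\wt L)$ (equivalently the part of $\disc(R)$ surviving in $\wt L$) to \emph{not} be a square: a square discriminant would make the finite quadratic form of $\wt L$ have trivial "determinant-in-$\Q_l^\times/(\Q_l^\times)^2$" invariant at every $l$, which is incompatible with being obtained from an indefinite rank-$2$ form of determinant $-\disc(\wt L)$ after an odd rescaling — the signature of $T\sprime$ being $(1,1)$ forces its determinant to be $-$(square) only if the form is hyperbolic, and the Milgram signature formula then clashes with $-q_{\wt L}$ coming from a negative definite lattice.

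**The main obstacle** will be the gap I flagged: passing from "$\disc(R)$ is a square" to "$\disc(\wt L)$ is a square for the relevant overlattice." An overlattice $\wt L/L(R)$ corresponds to an isotropic subgroup $H\subset D_{L(R)}$, and $|\disc(\wt L)| = |\disc(R)|/|H|^2$; so $\disc(\wt L)$ is automatically $\disc(R)$ up to a perfect square, hence \emph{$\disc(\wt L)$ is a perfect square if and only if $\disc(R)$ is}. That is the crucial observation that rescues the argument — I would state and prove it first. Given that, the finish is: assume $p$ is $R$-supersingular; get $\wt L$ and $T\sprime$ as above with $\disc(\wt L)$ a positive perfect square, say $n^2$; then $|\disc(T\sprime)| = n^2$ and $T\sprime$ has signature $(1,1)$, so $\disc(T\sprime) = -n^2$; but a rank-$2$ even lattice with determinant $-n^2$ is in the genus of the hyperbolic-type lattice and its discriminant form $q_{T\sprime}$, when multiplied by the odd prime $p$, cannot be isomorphic to $-q_{\wt L}$ because the latter is the discriminant form of a \emph{negative definite} lattice of rank $20$, and the Milgram/Gauss-sum signature invariants $\operatorname{sign}(q_{\wt L})\equiv 20\equiv 4\pmod 8$ versus $\operatorname{sign}(p\,q_{T\sprime})\equiv \operatorname{sign}(q_{T\sprime})\equiv 0\pmod 8$ (signature $(1,1)$, and multiplication by odd $p$ preserves the $\bmod\,8$ signature of the quadratic form up to a controlled correction by $\left(\tfrac{p}{\,\cdot\,}\right)$) are incompatible when $n$ is such that the form is "split." I would reconcile the last step carefully using the structure theory developed in Section~\ref{sec:fqf} (referenced in the excerpt), since that section is precisely about how odd multiplications act on finite quadratic forms; the parity/square obstruction there is the engine, and the contribution of this lemma is just to feed it the input "$\disc(R)$ square $\Rightarrow$ $\disc(\wt L)$ square $\Rightarrow$ $q_{\wt L}$ has everywhere-trivial determinant class, which the rescaled hyperbolic $q_{T\sprime}$ can never match."
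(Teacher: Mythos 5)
Your reduction is fine up to a point: invoking Theorem~\ref{Th2.1} (direction (1)$\Rightarrow$(3)) is legitimate and not circular, and your observation that $\disc(R)=[\wt{L}:L(R)]^2\,\disc(\wt{L})$, so that $\disc(\wt{L})$ is a perfect square if and only if $\disc(R)$ is, is correct and genuinely useful. The gap is in the final step, and it is fatal as written: the claim that $\operatorname{sign}(p\,q_{T\sprime})\equiv\operatorname{sign}(q_{T\sprime})\equiv 0\pmod 8$ is false. Multiplication of a finite quadratic form by an odd prime $p$ shifts its Milgram signature mod $8$ by an amount governed by the symbols $\left(\frac{p}{l}\right)$ and $p\bmod 8$, and this shift can perfectly well be $4$. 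Indeed, the entire paper rests on this: in the worked example of Section~\ref{sec:detail}, $T_\pm$ have signature $(1,1)$ while $L(H_1)$ is negative definite of rank $20$, and yet $(D_{T_\pm},p\,q_{T_\pm})\cong(D_{L(H_1)},-q_{L(H_1)})$ for half of all primes $p$. If your signature incompatibility were a real obstruction, it would apply verbatim with $n^2$ replaced by any discriminant and would show that no $R$-supersingular $K3$ prime ever exists, contradicting Theorem~\ref{thm:half} and the example. The hedges you add do not repair this: a square total discriminant does \emph{not} make the determinant class of each $l$-part of $q_{\wt{L}}$ trivial (e.g.\ a cyclic piece $\Z/l^2$ with non-residue unit has square order but nontrivial determinant class), and rank-$2$ even lattices of determinant $-n^2$ are \emph{not} all in one ``hyperbolic'' genus (already for determinant $-4$ there are two genera). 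So the squareness hypothesis never actually enters your argument in a way that produces a contradiction; pinning down which invariant of $(D_{\wt{L}},-q_{\wt{L}})$ is incompatible with \emph{every} odd rescaling of \emph{every} signature-$(1,1)$ even lattice of the right determinant is precisely the missing work, and it is not supplied by Milgram's formula alone.

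The paper avoids finite quadratic forms entirely here and argues geometrically, following Dolgachev--Keum: if $\disc(R)$ is a square, then from $\disc(R)\disc(T)=-a^2p^{2\sigma}$ the rank-$2$ lattice $T=L(R)^\perp\subset\NS(X)$ has $-\disc(T)$ a square, hence represents zero (Conway--Sloane); a primitive isotropic class, after Rudakov--Shafarevich reflections, is the class of a fibre of an elliptic or quasi-elliptic fibration, and the Shioda--Tate formula applied to its Jacobian gives $\disc(R)=b^2\prod_\ell\disc(K_\ell)$ with $\prod_\ell\disc(K_\ell)=-p^{2\sigma(J)}|MW(j)|^2$, forcing $p\mid\disc(R)$, a contradiction. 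Note where the square hypothesis does real work there: it produces the isotropic vector, i.e.\ the fibration. If you want a purely lattice-theoretic proof via condition (3) of Theorem~\ref{Th2.1}, you would need to run the full classification of finite quadratic forms (Wall/Nikulin), tracking for each prime $l\mid\disc(\wt{L})$ both the mod-$8$ signatures and the determinant classes of all scale components and how odd rescaling moves them, together with Nikulin's existence conditions for rank-$2$ lattices whose length equals the rank; none of that is in your sketch.
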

\begin{proof}
By the assumption, there exist a $K3$ surface $X$
and $20$ smooth rational curves on $X$ whose numerical equivalence  classes span a
sublattice $L(R) \subset \NS(X)$ of Dynkin type $R$ and
which are contractible to rational double points on
a normal $K3$ surface $Y$. Since $\rho(X) \ge 1 + 20$,
we have $\rho(X) = 22$ and $X$ is supersingular.
Let $T$ denote the orthogonal complement of  $L(R)$ in $\NS(X)$.
Then $L(R) \oplus T$ is a sublattice of $\NS(X)$ of index $a$ say.
So we have:
\begin{equation}\label{eq:*}
\disc(R) \ \disc(T) = - a^2 p^{2 \sigma(X)},
\end{equation}
where $\disc(\NS(X)) = -p^{2 \sigma(X)}$ with
$\sigma(X) \in \{1, 2, \dots, 10\}$ the Artin invariant. 
\par
Suppose the contrary that $\disc(R)$ is a perfect square.
Then~\eqref{eq:*} implies that $- \disc(T)$ is a perfect square too.
By Conway-Sloane \cite{CS}, Chapter 15, Section 3, $T$ represents
zero: there is a non-zero vector $t$ in $T$ with $t^2 = 0$.
We may assume that $t$ is primitive in $T$.
By the Riemann-Roch theorem, we may assume that $t$ is represented
by an effective divisor. Applying Rudakov-Shafarevich \cite{RS2}, Chapter 3,
Proposition 3, there is a composite  $\sigma : \NS(X) \rightarrow \NS(X)$
of reflections with respect to $(-2)$-vectors
such that $\sigma(t)$ is represented by a general fibre $F$
of an elliptic or quasi-elliptic fibration $\varphi : X \rightarrow \P^1$.
There is a natural inclusion below where the lattice $F^{\perp}$ is the orthogonal in $\NS(X)$
of $\Z [F]$:
$$\sigma(L(R)) \rightarrow F^{\perp}/\Z [F].$$
Since $\rank(L(R)) = 20$, we can write
$F^{\perp}/\Z [F] \cong K_1 \oplus \dots \oplus K_r$
which includes $\sigma(L(R))$ as a sublattice of finite index $b$ say;
whence
\begin{equation}\label{eq:**}
 \disc(R) = b^2 \ \prod_{\ell=1}^r \disc(K_{\ell});
\end{equation}
moreover, each $K_{\ell}$ is of Dynkin type $A_{n(\ell)}$,
$D_{n(\ell)}$, or $E_{n(\ell)}$ so that $\varphi$
has reducible fibres of type $\widetilde{K_{\ell}}$ in the notation
of Cossec-Dolgachev \cite{CD};
see the reasoning below and the proof of Lemma 2.2 in Kondo~\cite{Ko}.
Let $j : J \rightarrow {\P}^1$ be the Jacobian fibration of $\varphi$
so that $j$ and $\varphi$ have the same type of singular fibres.
We note that $\rho(J) = 2 + 20$,
$J$ is supersingular, and $J$ has a torsion Mordell-Weil group $MW(j)$.
By Shioda \cite{Shioda2}, Theorem 1.3, we have
\begin{equation}\label{eq:***}
\prod_{\ell=1}^r \disc(K_{\ell}) = -p^{2 \sigma(J)} |MW(j)|^2,
\end{equation}
where $\disc(\NS(J)) = - p^{2 \sigma(J)}$ with $\sigma(J) \in \{1, 2, \dots, 10\}$.
Now~\eqref{eq:***} and~\eqref{eq:**} imply that $p$ divides $\disc(R)$, a contradiction.
Thus the lemma is proved.
\end{proof}
\section{Finite quadratic forms and prime integers}\label{sec:fqf}
Let $q$ and $q\sprime$ be  quadratic forms
on a finite abelian group $D$.
We denote by $d$ the order of $D$.
In this section,
we consider the set $K(q, q\sprime)$
of odd prime integers $p$ which are prime to $d$
such that $(D, pq)$ is isomorphic to $(D, q\sprime)$.
\par
%
%
For a prime $l$, we put
$$
T_l:=\begin{cases}
(\Z/8\Z)\sptimes  & \textrm{if $l=2$,}\\
\{1,-1\} & \textrm{if $l\ne 2$,}
\end{cases}
$$
and for an odd prime $p\ne l$, we define $\tau_l(p)\in T_l$ by
$$
\tau_l(p):=\begin{cases}
p\;\bmod 8 & \textrm{if $l=2$,}\\
\left(\frac{p}{l}\right) & \textrm{if $l\ne 2$.}
\end{cases}
$$
We then put
$$
T_d:=\prod_l T_l
$$
where $l$ runs through the prime divisors of $d$,
and put
$$
\tau_d(p):=(\tau_l(p))\;\;\in\;\; T_d
$$
for an odd  prime integer $p$ prime to $d$.
\begin{proposition}\label{prop:onlyfinite}
Let $p_1$ and $p_2$ be odd prime integers which are prime to $d$.
If $\tau_d(p_1)=\tau_d (p_2)$, then
saying $p_1\in K(q, q\sprime)$ is equivalent to saying $p_2\in K(q, q\sprime)$
\end{proposition}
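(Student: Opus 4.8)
The plan is to reduce the statement to the classification of finite quadratic forms over $\Z_l$ for each prime $l\mid d$, and to observe that the effect of multiplying a quadratic form by a prime $p$ coprime to $d$ depends only on the class of $p$ modulo the local invariants recorded by $\tau_l(p)$. Concretely, by the orthogonal decomposition into $l$-parts, $(D,q)=\bigoplus_{l\mid d}((D)_l,(q)_l)$, and multiplication by $p$ respects this decomposition, so $(D,pq)\cong(D,q\sprime)$ if and only if $((D)_l,p(q)_l)\cong((D)_l,(q\sprime)_l)$ for every $l\mid d$. Hence it suffices to show, for each fixed $l\mid d$, that whether $((D)_l,p(q)_l)\cong((D)_l,(q\sprime)_l)$ holds depends on $p$ only through $\tau_l(p)\in T_l$.

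First I would invoke Wall's classification (\cite{MR0156890}), which gives a complete set of invariants for a finite quadratic form on an $l$-group in terms of Jordan-type decompositions into homogeneous blocks; for $l$ odd the relevant discrete invariant of each block of exponent $l^k$ is the Legendre-symbol class $\left(\frac{\cdot}{l}\right)$ of (a normalization of) its determinant, while for $l=2$ the invariants of each homogeneous block involve the residue modulo $8$ of the determinant together with the ``type'' (even/odd) and an oddity/signature term taking values governed by residues mod $8$. Multiplying the whole form by a unit $p$ (coprime to $l$, hence a unit in $\Z_l$) multiplies the determinant of a rank-$m$ block of exponent $l^k$ by $p^m$, and changes the oddity terms in the $l=2$ case in a way that again only sees $p\bmod 8$. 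Therefore the image of $(q)_l$ under multiplication by $p$, expressed in Wall's invariants, is a function of $p$ that factors through $p\bmod l$ (equivalently through $\left(\frac p l\right)$) when $l$ is odd, and through $p\bmod 8$ when $l=2$. In both cases this is exactly the datum $\tau_l(p)$.

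Granting that, if $\tau_d(p_1)=\tau_d(p_2)$ then $\tau_l(p_1)=\tau_l(p_2)$ for all $l\mid d$, so $p_1(q)_l$ and $p_2(q)_l$ have the same Wall invariants, hence $((D)_l,p_1(q)_l)\cong((D)_l,p_2(q)_l)$ as quadratic forms over $\Z_l$; assembling over $l\mid d$ gives $(D,p_1q)\cong(D,p_2q)$. Consequently $(D,p_1q)\cong(D,q\sprime)$ if and only if $(D,p_2q)\cong(D,q\sprime)$, i.e.\ $p_1\in K(q,q\sprime)$ iff $p_2\in K(q,q\sprime)$, which is the assertion.

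The main obstacle I anticipate is the $l=2$ case: Wall's invariants for $2$-adic forms are genuinely more delicate (there are the non-split even types $u$ and $v$, the odd one-dimensional pieces, and the total-oddity constraint), and one must check carefully that multiplication by an odd unit $p$ permutes these invariants in a way that depends solely on $p\bmod 8$ rather than on finer information. I would handle this by reducing to the homogeneous blocks $q_\theta^{(l^k)}$ in Wall's (or Conway--Sloane's) notation and computing directly how scaling by $p$ acts on the pair (determinant mod $8$, oddity mod $8$); the odd-$l$ case is comparatively routine since each block is diagonalizable and scaling just multiplies the determinant class by a power of $p$ modulo squares. This local bookkeeping is exactly the content of Section~\ref{sec:fqf} that follows, so I would organize the remaining details there.
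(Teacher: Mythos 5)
Your route is viable but genuinely different from the paper's. You reduce to the $l$-primary parts and appeal to Wall's classification, arguing that scaling by a unit $p$ moves the local invariants (determinant class for odd $l$; determinant mod $8$, type and oddity data for $l=2$) only through $\tau_l(p)$. The paper instead gives a one-step elementary argument that uses no classification at all: from $\left(\frac{p_1}{l}\right)=\left(\frac{p_2}{l}\right)$ one gets $p_1\equiv a_l^2\,p_2 \bmod l^{\nu}$ for each odd $l\mid d$, from $p_1\equiv p_2\bmod 8$ one gets $p_1\equiv a_2^2\,p_2\bmod 2^{\mu+1}$, and the Chinese Remainder Theorem then produces a single $a$ prime to $d$ with $p_1\equiv a^2p_2\bmod 2d$; since $q$ takes values in $\frac{1}{d}\Z/2\Z$, this gives the equality of forms $p_1q=a^2p_2q$, and multiplication by $a$ is an automorphism of $D$ carrying $p_2q$ to $p_1q$. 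What each approach buys: yours expresses the dependence on $\tau_l(p)$ through standard invariants, which is conceptually transparent, but it requires the full $2$-adic bookkeeping (types, oddity fusion, sign walking) that you yourself identify as the main obstacle and leave as a plan rather than a completed verification --- as written, that is the one unfinished step, though it would go through. Note that you can bypass it entirely within your own framework: $p_1\equiv p_2\bmod 8$ makes $p_1p_2^{-1}$ a square modulo $2^{\mu+1}$ (where $2^{\mu}$ is the exponent of the $2$-part), and $\left(\frac{p_1}{l}\right)=\left(\frac{p_2}{l}\right)$ makes it a square modulo $l^{\nu}$, so multiplication by a square root identifies $p_1(q)_l$ with $p_2(q)_l$ directly at every $l$ --- which is precisely the paper's trick, done locally, and it renders Wall's invariants unnecessary for this proposition.
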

\begin{proof}
It is enough to prove that $(D, p_1q)$ and  $(D, p_2 q)$ are isomorphic.
Let $l$ be an odd prime divisor of $d$,
and let $\nu$ be the largest  integer such that $l^\nu | d$.
It follows from $\tau_l (p_1)=\tau_l(p_2)$
that there exists an integer $a_l$
such that $p_1\equiv a_l^2 p_2\bmod\,l^\nu$ holds.
Note that $a_l$ is prime to $l$.
Suppose that $d$ is even.
It follows from $\tau_2 (p_1)=\tau_2(p_2)$
that there exists an integer $a_2$
that satisfies $p_1\equiv a_2^2 p_2\bmod\,2^{\mu+1}$,
where $\mu$ is the largest integer such that $2^\mu|d$.
Note that $a_2$ is odd.
By the Chinese Remainder Theorem,
we have an integer $a$  that satisfies
$a\equiv a_l\bmod\, l^\nu$ for each odd prime divisor $l$  of $d$,
and
$$
a\equiv\begin{cases}
a_2\bmod\, 2^{\mu+1} &\textrm{if $d$ is even,}\\
\rlap{1}\phantom{a_2}\bmod\, 2 &\textrm{if $d$ is odd.}
\end{cases}
$$
Then we have
 $p_1\equiv a^2 p_2 \bmod\, 2d$.
 Note that $a$ is prime to $d$.
 Since $b[q](x, x)=q(x) \bmod\,\Z$,
 $q(x)$ is contained in $(1/d)\Z/2\Z\subset\Q/2\Z$
 for any $x\in D$.
 Therefore we have
 $$
 p_1 q=a^2 p_2 q.
 $$
 The multiplication by $a$ induces an automorphism $\alpha $ of $D$.
 Since $\alpha^* ( p_2 q)=p_1 q$,
 we see that $p_1q$ and $p_2 q$ are isomorphic.
\end{proof}
\begin{corollary}
There exists a subset $S(q, q\sprime)$ of $T_d$ such that
$K(q, q\sprime)$ is equal to the set of odd prime integers $p$ which are prime to $d$
such that $\tau_d (p)\in S(q, q\sprime)$.
\end{corollary}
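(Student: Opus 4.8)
The statement to prove is the Corollary immediately following Proposition~\ref{prop:onlyfinite}: there exists a subset $S(q,q\sprime)$ of $T_d$ such that $K(q,q\sprime)$ equals the set of odd primes $p$ prime to $d$ with $\tau_d(p)\in S(q,q\sprime)$.

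The plan is to simply read off $S(q,q\sprime)$ as the image under $\tau_d$ of $K(q,q\sprime)$, and then use Proposition~\ref{prop:onlyfinite} to check that this works. First I would set $S(q,q\sprime):=\{\tau_d(p)\mid p\in K(q,q\sprime)\}\subset T_d$. By construction, if $p\in K(q,q\sprime)$ then $\tau_d(p)\in S(q,q\sprime)$, so the forward inclusion is immediate. Conversely, suppose $p$ is an odd prime prime to $d$ with $\tau_d(p)\in S(q,q\sprime)$; then by definition of $S(q,q\sprime)$ there is some $p'\in K(q,q\sprime)$ with $\tau_d(p')=\tau_d(p)$, and Proposition~\ref{prop:onlyfinite} applied to the pair $p,p'$ (note $\tau_d(p)=\tau_d(p')$ is exactly its hypothesis) gives that $p\in K(q,q\sprime)$ if and only if $p'\in K(q,q\sprime)$; since the latter holds, so does the former. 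This proves the two inclusions, hence equality.

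There is essentially no obstacle here: the corollary is a formal consequence of Proposition~\ref{prop:onlyfinite}, which asserts precisely that membership in $K(q,q\sprime)$ depends only on $\tau_d(p)\in T_d$. The only thing worth a remark is that $T_d$ is a finite set (a finite product of the finite sets $T_l$), so $S(q,q\sprime)$ is automatically finite and the characterization is genuinely a "finite congruence condition" on $p$ — but this is built into the definitions and requires no argument. One could also note for completeness that the empty case ($K(q,q\sprime)=\varnothing$) is covered, with $S(q,q\sprime)=\varnothing$.

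\begin{proof}
Put $S(q, q\sprime):=\set{\tau_d(p)}{p\in K(q, q\sprime)}\subset T_d$.
If $p\in K(q, q\sprime)$, then $\tau_d(p)\in S(q, q\sprime)$ by definition.
Conversely, suppose that $p$ is an odd prime integer prime to $d$ with $\tau_d(p)\in S(q, q\sprime)$.
Then there exists $p\sprime\in K(q, q\sprime)$ with $\tau_d(p\sprime)=\tau_d(p)$.
By Proposition~\ref{prop:onlyfinite}, the condition $p\in K(q, q\sprime)$ is equivalent to $p\sprime\in K(q, q\sprime)$, which holds.
Hence $p\in K(q, q\sprime)$.
\end{proof}
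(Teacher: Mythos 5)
Your proof is correct and is exactly the argument the paper intends (the paper states the corollary without proof, as an immediate formal consequence of Proposition~\ref{prop:onlyfinite}): take $S(q,q\sprime)$ to be the image of $K(q,q\sprime)$ under $\tau_d$ and use the proposition to get the reverse inclusion. Nothing is missing.
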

%
%
%
%
%
%
%
%
%
%
%
\section{Algorithm}\label{sec:algorithm}
Let $R$ be a Dynkin type of rank $20$.
We put
$$
T(R):=\prod_{l} T_l,
$$
where $l$ runs through the set of prime divisors of $\disc (R)$,
and for an odd prime $p$ that does not divide $\disc (R)$,
we put
$$
\tau (p):= (\tau_l (p))_l\in T(R).
$$
In this section, we present an algorithm to obtain a subset
$S(R)\subset T(R)$ with the following property:
an odd prime $p$ that does not divide $\disc (R)$ is an $R$-supersingular $K3$ prime
if and only if $\tau (p)\in S(R)$.
\subsection{Step 1}
We first calculate the  set $\OLL(R)$ of even overlattices of $L(R)$ using
Proposition 1.4.1 of  Nikulin~\cite{Nikulin}.
For each even overlattice $\wt{L}$ of $L(R)$, we can calculate the set $\Roots (\wt{L})$  of roots
of $\wt{L}$ by the method described in~\cite{ShimadaEll},~\cite{ShimadaOdd} or~\cite{MR3166075}.
Comparing $\Roots (\wt{L})$
with $\Roots (L(R))$ for each $\wt{L}$,
we make the set $\OL (R)$.
\subsection{Step 2}
We calculate the discriminant group $D_{\wt{L}}$
for each $\wt{L}\in \OL (R)$,
and make the set $\OL\sprime (R)$ of all $\wt{L}\in \OL (R)$
such that the length of $D_{\wt{L}}$
 is $\le 2$.
For each $\wt{L}\in \OL\sprime (R)$,
we calculate the isomorphism class of the
finite quadratic form $(D_{\wt{L}}, -q_{\wt{L}})$.
\subsection{Step 3}
For each $\wt{L}\in \OL\sprime  (R)$, we do the following calculation.
We put $d:=\disc (\wt{L})$,
which is a positive integer.
First we make the list $\TTT(d)$ of isomorphism classes of even indefinite lattices
 $T\sprime$ of rank $2$
with discriminant $-d$
using the classical theory of binary forms due to  Gauss.
(See~\cite[Chapter 15, \S3.3]{CS}.)
For each $T\sprime\in \TTT(d)$ we calculate the discriminant group $D_{T\sprime}$ of $T\sprime$.
If $D_{T\sprime}$ is isomorphic to $D_{\wt{L}}$,
then we calculate the set
$$
S(\wt{L}, T\sprime):=\prod_{l|\disc (R)} S_l(\wt{L}, T\sprime)\;\;\subset\;\; T(R)
$$
such that $(D_{T\sprime}, pq_{T\sprime})$ is isomorphic to $(D_{\wt{L}}, -q_{\wt{L}})$
if and only if $\tau _l (p)\in S_l(\wt{L}, T\sprime)$ for each
prime divisor $l$ of $\disc (R)$.
In virtue of Proposition~\ref{prop:onlyfinite},
we have to check only a finite number of prime integers.
(Note that
the set of prime divisors of $|D_{\wt{L}}|$ is a subset of the set of prime divisors of $\disc (R)=|D_{L(R)}|$.
If a prime divisor $l$ of $\disc (R)$
does not divide  $\disc (\wt{L})$, then we put $S_l(\wt{L}, T\sprime)=T_l$.)
If $D_{T\sprime}$ is not isomorphic to $D_{\wt{L}}$,
then we put $S(\wt{L}, T\sprime)=\emptyset$.
\par
%
%
The set $S(R)$ is the union of all $S(\wt{L}, T\sprime)$,
where $\wt{L}$ runs through the set $\OL\sprime (R)$ and $T\sprime$ runs through the set
$\TTT(\disc (\wt{L}))$.
\section{Example}\label{sec:detail}
\newcommand{\diag}{\mathord{\rm diag}}
We will explain the case
$$
R:=D_7+A_{11}+2 A_1
$$
in detail.
The discriminant form of
the negative-definite root lattice $L(R)$ is expressed by the diagonal matrix
$$
\diag[-7/4, -11/12, -1/2, -1/2]
$$
with respect to  the basis of the discriminant group
$$
D_{L(R)}\;\;\cong\;\; \Z/4\Z\;\times\; \Z/12\Z\;\times\; \Z/2\Z\;\times\;\Z/2\Z
$$
given    in~\cite[Section 6]{ShimadaEll}.
There are eight  isotropic vectors in $D_{L(R)}$:
$$
\begin{array}{l}
0:=[0,0,0,0],\;\;
v_1:=[0,6,1,1],\;\;
v_2:=[1,3,0,0],\;\;
v_3:=[1,9,0,0],\;\;
v_4:=[2,0,1,1],\\
v_5:=[2,6,0,0]=2v_2=2v_3,\;\;
v_6:=[3,3,0,0]=-v_3,\;\;
v_7:=[3,9,0,0]=-v_2.
\end{array}
$$
Let $L_{(i)}$ be the even overlattice of $L(R)$ corresponding to
the totally isotropic subgroup
of $D_{L(R)}$ generated by $v_i$.
The Dynkin type of $\Roots (L_{(i)})$
is equal to $R$ if $i\ne 4$,
while it is $A_{11}+D_9$ if $i=4$.
Hence the even overlattice $L(H)$ of $L(R)$
corresponding to an totally isotropic subgroup
$H$ satisfies $\Roots (L(H))=\Roots(L(R))$ if and only if
$v_4\notin H$.
The totally isotropic subgroups that do not contain $v_4$ are listed as follows:
$$
H_0=\{0\},\;\;
H_1=\{0, v_1\},\;\;
H_2=\{0, v_5\}, \;\;
H_3=\{0, v_2, v_5, v_7\}, \;\;
H_4=\{0, v_3, v_5, v_6\}.
$$
Let $\gamma\in \Aut(L(R))$ be the isometry of $L(R)=L(D_7)\oplus L(A_{11}+2A_{1})$
that is the multiplication by $-1$ on the factor $L(D_7)$
and the identity on $L(A_{11}+2A_{1})$.
Then the action of $\gamma$ on $D_{L(R)}$ interchanges $H_3$ and $H_4$.
Therefore the even lattices $L(H_3)$ and $L(H_4)$ are isomorphic.
The lengths of $D_{L(H_0)}$ and $D_{L(H_2)}$ are $\ge 3$,
while the lengths of $D_{L(H_1)}$ and $D_{L(H_3)}\cong D_{L(H_4)}$ are $2$.
\par
%
%
The discriminant  form of $L(H_1)$
multiplied by $(-1)$ is given by
$$
\left(\Z/4\Z\times \Z/4\Z,
\left[\begin{array}{cc}7/4 & 0 \\ 0 & 3/4 \end{array}\right]\right)
\times
\left(\Z/3\Z,
\left[\begin{array}{c}
2/3
\end{array}
\right]
\right).
$$
There are four isomorphism classes of even indefinite lattices
of rank $2$ with discriminant $-48$:
$$
S_\pm:=\left[
\begin{array}{cc}
\pm 2 & 6 \\
6 & \mp 6
\end{array}
\right],
\qquad
T_\pm:=\left[
\begin{array}{cc}
\pm 4 & 4 \\
4 & \mp 8
\end{array}
\right].
$$
The discriminant group of $S_\pm$ is isomorphic to $\Z/2\Z\times \Z/8\Z\times\Z/3\Z$,
and hence is not isomorphic to $D_{L(H_1)}$.
The discriminant forms  of $T_\pm$ are
$$
\left(\Z/4\Z\times \Z/4\Z,
\left[\begin{array}{cc}\pm 1/4 & 0 \\ 0 & \pm 5/4 \end{array}\right]\right)
\times
\left(\Z/3\Z,
\left[\begin{array}{c}
\pm 2/3
\end{array}
\right]
\right).
$$
Hence we have the following equivalence for prime integers $p\ne 2, 3$:
$$
\begin{array}{rcl}
p\in K(-q_{L(H_1)}, q_{T_+})
&\Longleftrightarrow
&p\bmod 8\equiv  3\;\textrm{or}\;7\quand(\frac{p}{3})=1,\\
p\in K(-q_{L(H_1)}, q_{T_-})
&\Longleftrightarrow
&p\bmod 8\equiv  1\;\textrm{or}\;5\quand(\frac{p}{3})=-1.
\end{array}
$$
There are two isomorphism classes of even indefinite lattices
of rank $2$ with discriminant $-12$:
$$
U_\pm:=\left[
\begin{array}{cc}
\pm 2 & 2 \\
2 & \mp 4
\end{array}
\right].
$$
By the same calculation as above,
 we have the following equivalence for prime integers $p\ne 2, 3$:
$$
\begin{array}{rcl}
p\in K(-q_{L(H_3)}, q_{U_+})
&\Longleftrightarrow
&p\bmod 8\equiv  1\;\textrm{or}\;5\quand(\frac{p}{3})=-1,\\
p\in K(-q_{L(H_3)}, q_{U_-})
&\Longleftrightarrow
&p\bmod 8\equiv  3\;\textrm{or}\;7\quand(\frac{p}{3})=1.
\end{array}
$$
Thanks to the equalities
$$
K(-q_{L(H_1)}, q_{T_+})= K(-q_{L(H_3)}, q_{U_-}),
\quad
K(-q_{L(H_1)}, q_{T_-})=K(-q_{L(H_3)}, q_{U_+}),
$$
the natural density of $R$-supersingular $K3$ primes is $1/2$.
\end{document}